\let\@fnsymbol\@arabic
\newcommand{\id}{{\boldsymbol{\mathbbm{1}}}}
\newcommand{\eps}{\varepsilon}
\newcommand{\R}{\mathbb{R}}
\newcommand{\N}{\mathbb{N}}
\newcommand{\C}{\mathbb{C}}
\DeclareMathOperator{\GL}{GL}
\DeclareMathOperator{\SO}{SO}
\DeclareMathOperator{\Sym}{Sym}
\DeclareMathOperator{\PSym}{PSym}
\newcommand{\GLn}{\GL(n)}
\newcommand{\SOn}{\SO(n)}
\DeclareMathOperator{\Log}{Log}
\DeclareMathOperator{\sym}{sym}
\DeclareMathOperator{\dev}{dev}
\DeclareMathOperator{\tr}{tr}
\DeclareMathOperator{\dist}{dist}
\DeclareMathOperator{\real}{Re}
\DeclareMathOperator{\imag}{Im}
\newcommand{\intd}[1]{{\mathrm{d}#1}}
\newcommand{\dd}[1]{\frac{\mathrm{d}}{\mathrm{d}#1}}
\newcommand{\norm}[1]{\Vert #1 \Vert}
\newcommand{\innerproduct}[1]{\langle #1 \rangle}
\newcommand{\tel}[1]{\frac{1}{#1}}
\newcommand{\half}{\tel{2}}
\newcommand{\WH}{W_{\textrm{H}}}
\theoremstyle{plain}
\newcounter{theoremCounter}
\numberwithin{theoremCounter}{section}
\newtheorem{lemma}[theoremCounter]{Lemma}
\newtheorem{proposition}[theoremCounter]{Proposition}
\newtheorem{theorem}[theoremCounter]{Theorem}
\newtheorem{corollary}[theoremCounter]{Corollary}
\newtheorem{conjecture}[theoremCounter]{Conjecture}
\theoremstyle{definition}
\newtheorem{definition}[theoremCounter]{Definition}
\newtheorem{remark}[theoremCounter]{Remark}
\theoremstyle{plain}
\newtheorem*{theorem*}{Theorem 1.2}
\newcommand{\distlogeuc}{\dist_{\text{\rm{log-Euclid}}}}
\newcommand{\dgPsym}{\dist_{\mathrm{geod,}\PSym(n)}}
\renewcommand{\PSym}{\mathop{\mathrm{Sym_+}}}
\begin{document}
\title{\vspace{-2.9em}The sum of squared logarithms inequality in arbitrary dimensions}
\author{Lev Borisov\footnote{Lev Borisov, Department of Mathematics, Rutgers University, 240 Hill Center, Newark, NJ 07102, United States, email: borisov@math.rutgers.edu}
	\quad and\quad Patrizio Neff\footnote{Patrizio Neff, Head of Lehrstuhl für Nichtlineare Analysis und Modellierung, Fakultät für Mathematik, Universität Duisburg-Essen, Thea-Leymann Str. 9, 45127 Essen, Germany, email: patrizio.neff@uni-due.de}
	\quad and\quad Suvrit Sra\footnote{Suvrit Sra, Laboratory for Information and Decision Systems, Massachusetts Institute of Technology, 77 Massachusetts Ave, Cambridge, MA 02139, United States, email: suvrit@mit.edu}
	\quad and\quad Christian Thiel\footnote{Corresponding author: Christian Thiel, Lehrstuhl für Nichtlineare Analysis und Modellierung, Fakultät für Mathematik, Universität Duisburg-Essen, Thea-Leymann Str. 9, 45127 Essen, Germany, email: christian.thiel@uni-due.de}}
\date{\vspace{-0.3em}\small\today}
\maketitle
\vspace{-.7cm}
\begin{abstract}
\noindent We prove the \emph{sum of squared logarithms inequality} (SSLI) which states that for nonnegative vectors $x, y \in \R^n$ whose elementary symmetric polynomials satisfy $e_k(x)\leq e_k(y)$ (for $1\leq k < n$) and $e_n(x)=e_n(y)$, the inequality $\sum_i (\log x_i)^2 \leq \sum_i(\log y_i)^2$ holds. Our proof of this inequality follows by a suitable extension to the complex plane. In particular, we show that the function $f\colon M\subseteq \C^n\to \R$ with $f(z)=\sum_i(\log z_i)^2$ has nonnegative partial derivatives with respect to the elementary symmetric polynomials of $z$. This property leads to our proof. We conclude by providing applications and wider connections of the SSLI.
\end{abstract}

\bigskip

\textbf{Key Words:} elementary symmetric polynomials, fundamental theorem of algebra, polynomials, geodesics, Hencky energy, logarithmic strain tensor, positive definite matrices, algebraic geometry, matrix analysis

\bigskip

\textbf{AMS 2010 subject classification: 26D05, 26D07, 30C15, 97H20}

\begingroup
\parskip=0.1em
\tableofcontents
\endgroup
%
%
\section{Introduction}
The \emph{sum of squared logarithms inequality} (SSLI) arose first as a scientific issue in 2012 \cite{neff2013henckyPAMM} while proving the following optimality result
\begin{equation}\label{question}
\inf_{Q\in\SO(n)} \| \sym \Log Q^TF\|^2\ = \inf_{Q\in\SO(n)}\ \ \inf_{\underset{\mathclap{\exp(Y)=Q^TF}}{Y\in\R^{n\times n}}}\| \sym Y\|^2\ =\ \| \log \sqrt{F^TF}\|^2\,,
\end{equation}
where $Y=\Log X$ denotes all solutions of the matrix exponential equation $\exp(Y) = X$, $\norm{\cdot}$ denotes the Frobenius matrix norm, and $\sym X \colonequals \half(X+X^T)$. 

The SSLI (formally stated in Theorem~\ref{theossli}) has been investigated in a series of works. In 2013, it was examined more closely by B\^{\i}rsan, Neff and Lankeit in \citep{Neff_log_inequality13}, who found a proof for $n\in\{2,3\}$. For $n=3$, the inequality can be written as follows: let $x_1,x_2,x_3,y_1,y_2,y_3 >0$ be positive real numbers such that
\begin{align*}
x_1+x_2+x_3\ &\leq\ y_1+y_2+y_3\,,\\
x_1\,x_2+x_1\,x_3+x_2\,x_3\ &\leq\ y_1\,y_2+y_1\,y_2+y_2\,y_3\,,\\
x_1\,x_2\,x_3\ &=\ y_1\,y_2\,y_3\,.
\end{align*}
Then, the sum of their squared logarithms satisfy the following inequality:
\begin{equation*}
(\log x_1)^2 + (\log x_2)^2 + (\log x_2)^2\ \leq\ (\log y_1)^2 + (\log y_2)^2 + (\log y_3)^2\,.
\end{equation*}
In 2015, \citet{PompeNeff2015} proved the SSLI for $n=4$, based on a new idea that did not extend to higher dimensions without further complications. To state the SSLI for arbitrary $n$, we first recall
\begin{definition}\label{defiElementarySymmetricPolynomial}
Let $x\in\R^n$. We denote by $e_k(x)$ the $k$-th \emph{elementary symmetric polynomial}, i.e.\ the sum of all $\binom nk$ products of exactly $k$ components of $x$:
\begin{equation}
e_k(x) \colonequals \sum_{1\le i_1<\ldots<i_k \le n}x_{i_1}x_{i_2}\ldots x_{i_k}\qquad\textrm{for any\ \,$k\in\{1,\ldots,n\}$}\,.\notag
\end{equation}
Note that $e_1(x)=x_1+x_2+\cdots+x_n$ and $e_n(x)=x_1\cdot x_2\cdot \ldots \cdot x_n$.
\end{definition}

We also write $\R_+ \colonequals \{x\in\R\,|\, x>0\}$ and $\R_- \colonequals \{x\in\R\,|\, x<0\}$ and set $\R_+^n = (\R_+)^n$.

\begin{theorem}[Sum of squared logarithms inequality]\label{theossli}
Let $n\in\N$ and $x, y \in \R_+^n$ such that 
\begin{align*}
e_k(x)\ &\leq\ e_k(y)\qquad \mathrlap{\textrm{for all\ \,$k\in\{1,\ldots,n-1\}$}},\\
\intertext{and}
~\\[-3.5em]
e_n(x)\ &\boldsymbol{=}\ e_n(y)\,.\\[1.05em]
\intertext{Then}
~\\[-4.55em]
\sum_{i=1}^n(\log x_i)^2\ &\leq\ \sum_{i=1}^n(\log y_i)^2\notag\,.
\end{align*}

This statement can equivalently be expressed as a minimization problem:

For $x\in\R_+^n$, let
\begin{equation}
\mathcal{E}_x\ \colonequals\ \bigl\{\,y\in\R_+^n\ |\ e_k(x)\leq e_k(y)\quad \textrm{for all\ \,$k\in\{1,\ldots,n-1\}$\ \ and\ \, $e_n(x)=e_n(y)$}\bigr\}\,.\notag
\end{equation}
Then
\begin{equation}
\inf_{y\in\mathcal{E}_x}\biggr\{ \sum_{i=1}^n(\log y_i)^2\biggr\}\ =\ \sum_{i=1}^n(\log x_i)^2\,.\notag
\end{equation}
\end{theorem}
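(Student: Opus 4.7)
The strategy signaled in the abstract is to regard $F(e_1,\ldots,e_n) \colonequals \sum_i (\log z_i)^2$ as a symmetric function of the roots $z_1,\ldots,z_n$ of $p(t)=t^n-e_1 t^{n-1}+\cdots+(-1)^n e_n$, show that $F$ has nonnegative partial derivatives $\partial F/\partial e_k$ for $k<n$, and then deduce the theorem by integrating along a path from $(e_1(y),\ldots,e_{n-1}(y),e_n(y))$ to $(e_1(x),\ldots,e_{n-1}(x),e_n(x))$ whose first $n-1$ coordinates are nonincreasing while $e_n$ is held fixed. Before any such derivative makes sense, I would fix a complex domain $M\subseteq\C^n$ on which the principal branch of $(\log z)^2$ is holomorphic in each slot (say $z_i\notin (-\infty,0]$) and invoke the fundamental theorem of symmetric polynomials to descend $f(z)=\sum_i(\log z_i)^2$ to a well-defined function $F$ of the $e_j$ on the image domain in coefficient space; since complex roots of a polynomial with real coefficients come in conjugate pairs, $F$ takes real values on the real locus.

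The heart of the argument is the sign of $\partial F/\partial e_k$. Implicit differentiation of $p(z_i)\equiv 0$ yields $\partial z_i/\partial e_k = (-1)^{k+1} z_i^{n-k}/p'(z_i)$, whence
\[
\frac{\partial F}{\partial e_k} \;=\; 2\,(-1)^{k+1}\sum_{i=1}^{n}\frac{\log z_i\cdot z_i^{n-k-1}}{p'(z_i)}\,,
\]
which I recognize as the total residue of the meromorphic function $\log(t)\,t^{n-k-1}/p(t)$ at the zeros of $p$. A keyhole contour around the branch cut $(-\infty,0]$, together with the decay $t^{-k-1}$ at infinity (valid because $k\geq 1$) and boundedness of the integrand near $0$ (using $e_n>0$), should collapse the residue sum to an integral along the cut; after the substitution $t=-s$ this reduces to
\[
\frac{\partial F}{\partial e_k} \;=\; 2\int_0^\infty \frac{s^{n-k-1}}{\prod_{i=1}^n(s+z_i)}\,ds\,,
\]
which is strictly positive because each real factor $s+z_i$ (for $z_i>0$) and each conjugate-pair factor $(s+z_i)(s+\bar z_i) = s^2 + 2\,\mathrm{Re}(z_i)\,s + |z_i|^2$ remains positive on $s>0$ whenever no root lies on $(-\infty,0]$.

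With the positivity of $\partial F/\partial e_k$ in hand, the theorem reduces to constructing a path from $e(y)$ to $e(x)$ in coefficient space, inside the admissible locus $\{e_n=\mathrm{const}\}\setminus\{\text{polynomials with a root in }(-\infty,0]\}$, along which each of the first $n-1$ coordinates is monotonically nonincreasing. The main obstacle I anticipate is precisely this path-existence step: as one decreases some $e_k$, two real positive roots can collide, leave the real axis as a complex-conjugate pair, and in principle return to land on the negative real axis, taking the path out of the admissible region and invalidating the derivative computation above. Ruling out this scenario --- for instance by an algebraic-geometric codimension argument showing that the forbidden locus of polynomials with a root on $(-\infty,0]$ can be avoided by a suitable polygonal path, or by exhibiting an explicit well-chosen path with the required monotonicity --- is where the substantive work will lie.
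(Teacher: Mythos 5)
Your derivative computation and its positivity are correct and essentially coincide with the paper's: the keyhole-contour route you sketch is in fact the paper's own alternative proof of this key positivity statement (its Lemma \ref{borisov}), while the main text reaches the same integral $2\int_0^\infty t^{n-k-1}/\prod_j(t+z_j)\,\mathrm{d}t$ by partial fractions. The genuine gap is that you stop exactly where the remaining work begins and misjudge its nature. The path-existence step you flag as ``where the substantive work will lie'' is not a delicate codimension or polygonal-path problem at all: take the straight segment $s\mapsto (1-s)\,e(x)+s\,e(y)$ in coefficient space. Since $\R_+^n$ is convex, every coordinate stays strictly positive along it, and a monic polynomial $t^n-e_1t^{n-1}+e_2t^{n-2}-\cdots+(-1)^ne_n$ with all $e_k>0$ can have no root in $(-\infty,0]$, because substituting $t=-s$ with $s\ge 0$ gives $s^n+e_1s^{n-1}+\cdots+e_n>0$. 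So the scenario you fear --- a conjugate pair drifting back onto the negative real axis --- is impossible on this segment; the forbidden locus is avoided automatically, the first $n-1$ coordinates are monotone along the segment, and $e_n$ is constant, which is exactly what the integration argument needs. Leaving this unresolved, when it is the one observation that makes the whole strategy work, is a real omission.

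A second gap you do not mention: your formula $\partial z_i/\partial e_k=(-1)^{k+1}z_i^{n-k}/p'(z_i)$, and with it the differentiability of $F$, breaks down wherever $p$ has multiple roots, and roots can certainly collide along the segment (and do at the endpoint itself if $x$ has repeated entries). The paper closes this with three further steps you would also need: (i) a discriminant argument --- $D(p_s)$ is a polynomial in $s$, so if at least one endpoint has simple roots there are only finitely many bad parameters, and continuity of $s\mapsto F(e^s)$ together with positivity of the derivative off this finite set still yields monotonicity; (ii) if $x$ has equal components, perturb each equal pair via $x_i'=x_i(1+\varepsilon)$, $x_j'=x_j/(1+\varepsilon)$, which preserves $e_n$, strictly increases $\sum_i(\log x_i)^2$, and keeps $e_k(x')<e_k(y)$ for small $\varepsilon$; (iii) the non-strict hypotheses $e_k(x)\le e_k(y)$ are recovered by lowering the $e_k(x)$ with equality by $1/m$ and letting $m\to\infty$, using continuity of roots in the coefficients. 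Without these steps the ``integrate the positive derivative'' argument proves the inequality only for generic data, not the theorem as stated.
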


Since $(\log y_i)^2\geq 0$ for all\ \,$i\in\{1,\ldots,n\}$, the expression is bounded below by $0$, so the infimum clearly exists. Note that $\mathcal{E}_x$ is a non-convex set.

\begin{remark}\label{counterexample}
If the equality assumption $e_n(x)=e_n(y)$ in the last elementary symmetric polynomial is replaced by the weaker requirement $e_n(x)\leq e_n(y)$, then the conclusion no longer holds in general. As a counterexample, consider $x=(e^{-1},\ldots,e^{-1})\in\R^n$ and $y=(1,\ldots,1)\in\R^n$; then $e_k(x) = \binom nk e^{-k} \leq \binom nk = e_k(y)$ for all $k\in\{1,\dotsc,n\}$, but $\sum_{i=1}^n(\log x_i)^2 = n > 0 = \sum_{i=1}^n(\log y_i)^2$.
\end{remark}

Neff, Nakatsukasa and Fischle \cite{Neff_Nagatsukasa_logpolar13} showed that the SSLI implies \eqref{question}.

The proof of the SSLI presented in this work was motivated by the second named author, who published the SSLI conjecture (at that point) on the internet platform \emph{MathOverflow} \cite{sslimathoverflow}. The first named author extended the problem to the complex plane and presented a sketch of a proof.


Miroslav {\v{S}}ilhav{\'y} (Czech Academy of Science) considered the problem after private communication with P. Neff and provided a characterization of functions that satisfy E-monotonicity.  Interestingly, shortly after seeing L. Borisov's solution, one of the authors (S. Sra) suggested via email that ``a full generalization of this idea should be possible via Pick-Nevalinna theory.'' This idea is natural, and the details were independently discovered and worked out by M. {\v{S}}ilhav{\'y} \cite{silhavy_2015}; it is also worth noting that actually \citet{jozsa2015symmetric} foreshadowed the Pick function based approach to proving such inequalities but did not develop it fully. Our remarks here merely outline the historical sequence of events (to our knowledge), and to highlight the remarkable fact that like many other problems in mathematics, the SSLI also witnessed several essentially simultaneous solutions; each exposing different aspects of it and thus contributing to our understanding.

In this paper we give a self-contained exposition of our new methods towards proving the SSLI.
%
%
\section{Proof of Theorem \ref{theossli}}
\label{section:proof}
In our further calculations, we will use the following lemma and the resulting corollary.

\begin{lemma}
Let $z_1,\ldots,z_n$ be \emph{pairwise different} complex numbers and $k\in \{0,\ldots,n-1\}$. Then
\begin{equation}\label{pfd}
\sum_{i=1}^n\frac{z_i^k}{(t-z_i)\prod_{\underset{j\neq i}{j=1}}^n(z_i-z_j)}\ =\ \frac{t^k}{\prod_{j=1}^n(t-z_j)}\qquad\textrm{for all $t\in\C\setminus\{z_1,\ldots,z_n\}$}\,.
\end{equation}
\end{lemma}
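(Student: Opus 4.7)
The plan is to prove this by a straightforward partial fraction decomposition (equivalently, Lagrange interpolation). The right-hand side is a rational function in $t$ whose denominator $\prod_{j=1}^n(t-z_j)$ has $n$ simple poles (thanks to pairwise distinctness of the $z_i$) and whose numerator $t^k$ has degree $k \le n-1 < n$. Under these conditions the classical theory guarantees a unique partial fraction expansion of the form $\sum_i \frac{A_i}{t-z_i}$ with no polynomial part, and the constants $A_i$ are determined by residues.

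First I would compute the residue at $t=z_i$ of the right-hand side. Writing $q(t)=\prod_{j=1}^n(t-z_j)$, so that $q'(z_i)=\prod_{j\neq i}(z_i-z_j)$, the standard residue formula for a simple pole gives
\begin{equation*}
\operatorname{Res}_{t=z_i}\frac{t^k}{q(t)}\ =\ \frac{z_i^k}{q'(z_i)}\ =\ \frac{z_i^k}{\prod_{j\neq i}(z_i-z_j)}\,.
\end{equation*}
Hence the partial fraction decomposition reads
\begin{equation*}
\frac{t^k}{\prod_{j=1}^n(t-z_j)}\ =\ \sum_{i=1}^n\frac{1}{t-z_i}\cdot\frac{z_i^k}{\prod_{j\neq i}(z_i-z_j)}\,,
\end{equation*}
which is exactly \eqref{pfd}.

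If a more self-contained argument is preferred, I would instead multiply both sides of the claimed identity by $\prod_j(t-z_j)$ and verify the resulting polynomial identity
\begin{equation*}
t^k\ =\ \sum_{i=1}^n z_i^k\prod_{j\neq i}\frac{t-z_j}{z_i-z_j}\,,
\end{equation*}
which is the Lagrange interpolation formula for the polynomial $t\mapsto t^k$ at the $n$ distinct nodes $z_1,\ldots,z_n$. Since the right-hand side has degree at most $n-1$, agrees with $t^k$ at $n$ distinct points (by inspection, the $i$-th summand equals $z_i^k$ at $t=z_i$ and all other summands vanish there), and $t^k$ itself has degree $k\le n-1$, both sides are polynomials of degree at most $n-1$ coinciding at $n$ points, hence they are equal.

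I do not expect any genuine obstacle; the only place requiring a moment of care is the degree bound $k\le n-1$, which is essential both for the absence of a polynomial part in the partial fraction expansion and for the uniqueness in the Lagrange interpolation argument. Pairwise distinctness of the $z_i$ is likewise used to ensure the poles are simple and the Lagrange denominators $\prod_{j\neq i}(z_i-z_j)$ are nonzero.
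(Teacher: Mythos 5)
Your proposal is correct and its main argument (partial fraction decomposition of $t^k/\prod_j(t-z_j)$ with the coefficients identified as $z_i^k/\prod_{j\neq i}(z_i-z_j)$ via the residue, i.e.\ multiply by $t-z_i$ and let $t\to z_i$) is essentially the same as the paper's proof. The alternative Lagrange interpolation argument you sketch is an equivalent reformulation and equally valid, but not needed.
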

For $n=3$ and $k=2$, for example, the equality reads
\begin{equation*}\textstyle
\frac{a^2}{(a-b)(a-c)(t-a)} + \frac{b^2}{(b-a)(b-c)(t-b)} + \frac{c^2}{(c-a)(c-b)(t-c)} = \frac{t^2}{(t-a)(t-b)(t-c)}\,.
\end{equation*}
\begin{proof}
Let $k\in\{0,\ldots,n-1\}$. Then according to the theorem of partial fraction decomposition, there exist complex numbers $a_1,\ldots,a_n$ such that
\begin{equation*}
\frac{t^k}{\prod_{j=1}^n(t-z_j)}\ =\ \sum_{i=1}^n\frac{a_i}{t-z_i}\qquad\textrm{for all $t\in\C\setminus\{z_1,\ldots,z_n\}$}\,.
\end{equation*}
For given $r\in\{1,\ldots,n\}$, multiplying both sides of the equation with $t-z_r$ yields
\begin{equation}
\frac{t^k}{\prod_{\underset{j\neq r}{j=1}}^n(t-z_j)}\ =\ \sum_{i=1}^n\frac{t-z_r}{t-z_i}\,a_i\qquad\textrm{for all $t\in\C\setminus\{z_1,\ldots,z_n\}$}\,.
\end{equation}
Taking the limit $t\to z_r$ on both sides of the equality, we find
\begin{equation*}
\raisebox{0.4em}{$\displaystyle\frac{z_r^k}{\prod_{\underset{j\neq r}{j=1}}^n(z_r-z_j)}\ =\ a_r$}\,.\qedhere
\end{equation*}
\end{proof}

\begin{corollary}
Let $z_1,\ldots,z_n \in \C$ be \emph{pairwise different} complex numbers. Then
\begin{equation}
\sum_{i=1}^n\frac{z_i^k}{\prod_{\underset{j\neq i}{j=1}}^n(z_i-z_j)}\ =\ 0\qquad\textrm{for all $k\in \{0,\ldots,n-2\}$}\,.\label{pfd2}
\end{equation}
\end{corollary}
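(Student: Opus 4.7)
The plan is to derive the corollary directly from the lemma by an asymptotic argument as $t\to\infty$. The lemma gives, for every $k\in\{0,\ldots,n-1\}$,
\begin{equation*}
\sum_{i=1}^n\frac{z_i^k}{(t-z_i)\prod_{j\neq i}(z_i-z_j)}\ =\ \frac{t^k}{\prod_{j=1}^n(t-z_j)}\,,
\end{equation*}
and the right-hand side decays like $t^{k-n}$ at infinity. Since I want a statement about the sum $\sum_i z_i^k/\prod_{j\neq i}(z_i-z_j)$ without the factors $(t-z_i)$, the natural move is to multiply both sides by $t$ first.

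First, I would multiply the identity through by $t$, so that the summand on the left becomes $\frac{t}{t-z_i}\cdot\frac{z_i^k}{\prod_{j\neq i}(z_i-z_j)}$ and the right-hand side becomes $\frac{t^{k+1}}{\prod_j(t-z_j)}$. Then I would pass to the limit $t\to\infty$. On the left, $\frac{t}{t-z_i}\to 1$ for each fixed $i$, so the sum converges termwise to $\sum_{i=1}^n \frac{z_i^k}{\prod_{j\neq i}(z_i-z_j)}$, which is exactly the expression the corollary claims to be zero. On the right, the rational function $\frac{t^{k+1}}{\prod_{j=1}^n(t-z_j)}$ has numerator of degree $k+1$ and denominator of degree $n$; under the hypothesis $k\le n-2$ we have $k+1\le n-1<n$, so the quotient tends to $0$.

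Combining both limits gives the claimed equality $\sum_{i=1}^n \frac{z_i^k}{\prod_{j\neq i}(z_i-z_j)}=0$ for all $k\in\{0,\ldots,n-2\}$.

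I do not anticipate a real obstacle: everything is a finite sum of rational functions of $t$ with fixed poles bounded away from infinity, so interchanging the limit with the finite sum is immediate and no uniform estimate is needed. The only point to double-check is the degree count $k+1<n$, which is precisely why the range $k\le n-2$ (one short of what the lemma allows) is the correct range for the corollary; the case $k=n-1$ would yield the nonzero limit $1$ on the right, recovering Newton's identity $\sum_i z_i^{n-1}/\prod_{j\neq i}(z_i-z_j)=1$ rather than $0$, which is consistent with the statement.
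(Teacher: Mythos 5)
Your argument is correct, but it is not the route the paper takes. The paper applies the partial-fraction identity \eqref{pfd} to the $n-1$ points $z_1,\ldots,z_{n-1}$ only (which is why $k$ is restricted to $\{0,\ldots,n-2\}$ there), then substitutes $t=z_n$ and rearranges: the evaluation term $\frac{z_n^k}{\prod_{j<n}(z_n-z_j)}$ becomes the $i=n$ summand, and the sign flips $(t-z_i)\mapsto(z_n-z_i)=-(z_i-z_n)$ turn each left-hand summand into $-\frac{z_i^k}{\prod_{j\neq i}(z_i-z_j)}$, so everything collapses to the claimed identity by pure finite algebra, with no limiting process. You instead keep all $n$ points, multiply \eqref{pfd} by $t$ and let $t\to\infty$; since the sum is finite, the termwise limits $\frac{t}{t-z_i}\to 1$ are unproblematic, and the degree count $k+1\le n-1<n$ kills the right-hand side, so your proof is complete as written. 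Each approach has a small advantage: the paper's is limit-free and stays entirely within rational-function algebra, while yours treats the $z_i$ symmetrically (no distinguished point $z_n$) and, as you note, simultaneously delivers the companion identity $\sum_i z_i^{n-1}/\prod_{j\neq i}(z_i-z_j)=1$ for the boundary case $k=n-1$ (essentially the residue-at-infinity or Lagrange-interpolation identity; calling it Newton's identity is a slight misnomer, but the claim itself is right). No gap.
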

For example, we find for $n=4$ and $k=2$: 
\begin{equation*}\textstyle
\frac{a^2}{(a-b)(a-c)(a-d)} + \frac{b^2}{(b-a)(b-c)(b-d)} + \frac{c^2}{(c-a)(c-b)(c-d)} + \frac{d^2}{(d-a)(d-b)(d-c)}\ =\ 0\,.
\end{equation*}
\begin{proof}
Let $k\in\{0,\ldots,n-2\}$. Using equality \eqref{pfd}, we obtain
\begin{equation}
\sum_{i=1}^{n-1}\frac{z_i^k}{(t-z_i)\prod_{\underset{j\neq i}{j=1}}^{n-1}(z_i-z_j)}\ =\ \frac{t^k}{\prod_{j=1}^{n-1}(t-z_j)}\qquad\textrm{for all $t\in\C\setminus\{z_1,\ldots,z_{n-1}\}$}\,.
\end{equation}
Setting $t\colonequals z_n$ and rearranging the equation yields the statement.
\end{proof}

To introduce the basic idea of our proof, we first recall the relationship between a vector $z\colonequals (z_1,z_2,\ldots,z_n)$ and the vector of the elementary symmetric polynomials evaluated at $z$, i.e.\ $(e_1(z),\ldots,e_n(z))$. To this end, we define the characteristic polynomial $h_e$ of a linear map with the invariants $e_1,\ldots,e_n$:
\begin{equation*}
h_e(t)\ \colonequals\ t^n -e_1\,t^{n-1} + e_2\,t^{n-2}+\ldots+(-1)^n\,e_n\ =\ t^n + \sum_{k=1}^n(-1)^k\,e_k\,t^{n-k}\,.
\end{equation*}
Since if $h_e$ has the roots $z_1,\ldots,z_n$, we can write
\begin{align}
h_e(t)\ &=\ (t-z_1)(t-z_2)\ldots(t-z_n)\notag\\
&=\ t^n-(z_1+\ldots + z_n)t^{n-1} + (z_1\,z_2 + \ldots + z_{n-1}\,z_n)t^{n-2}+\ldots+(-1)^nz_1\ldots z_n\\
&=\ t^n-e_1(z)t^{n-1}+e_2(z)t^{n-2}+\ldots+(-1)^ne_n(z)\notag\,.
\end{align}

In this paper we will study different restrictions in the co-domain of the elementary symmetric polynomials. However, we will always assume that this co-domain is positive and real.
It is also convenient to introduce an ordering of the complex numbers in order to ensure the uniqueness of the coefficient vector corresponding to a given set of roots. We therefore define the set
\begin{equation*}
\C^{n\uparrow}\ \colonequals\ \bigl\{z\in\C^n\,|\ \real(z_1)\geq\ldots\geq\real(z_n)\,,\ \real z_i = \real z_{i+1} \Rightarrow \imag z_i \geq \imag z_{i+1}\ \forall i\in\{1,\ldots,n-1\}\bigr\}\,,
\end{equation*}
which contains only ordered vectors and thereby excludes all rearrangements. Furthermore, we define the set
\begin{equation*}
M\ \colonequals\ \bigl\{ z\in \C^{n\uparrow}\,|\ e_1(z),e_2(z),\ldots,e_n(z)\in\R_+\bigr\}
\end{equation*}
of all ordered vectors with exclusively positive elementary symmetric polynomials. In contrast to previous work on the SSLI, we extend our view directly to complex roots in $M$, which provides the crucial advantage.

\begin{lemma}\label{defphi}
The function $M\to\R_+^n$ that maps each vector $z\in M$ onto the coefficient vector $e$ corresponding to the uniquely determined polynomial $h_e$ with roots $z_1,\dotsc,z_n$ is continuous and bijective. Its inverse function is continuous as well, and we denote it by
\begin{equation*}
\varphi\colon \R_+^n\to M\subseteq \C^{n\uparrow}\,,\quad(e_1,\ldots e_n) \mapsto \varphi(e_1,\ldots, e_n)\,.
\end{equation*}

Furthermore, each vector $(z_1,\ldots,z_n) \in M$ contains only positive real numbers and complex conjugate pairs of numbers.
\end{lemma}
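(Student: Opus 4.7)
My plan is to treat the three assertions of Lemma~\ref{defphi} in the following order: first the structural description of vectors in $M$, then the bijection between $M$ and $\R_+^n$, and finally continuity of the inverse $\varphi$.

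For the structural claim, the polynomial $h_e$ has real coefficients when $e\in\R_+^n$, so its non-real roots must occur in complex conjugate pairs. To rule out non-positive real roots I would substitute $t=-s$ for $s\ge 0$, which produces
\begin{equation*}
h_e(-s)\ =\ (-1)^n\bigl(s^n+e_1\,s^{n-1}+\cdots+e_{n-1}\,s+e_n\bigr)\,.
\end{equation*}
The bracketed expression is at least $e_n>0$, so $h_e$ has no zeros on $(-\infty,0]$. Consequently every root lies either in $\R_+$ or in $\C\setminus\R$, with the non-real ones paired by conjugation; this is the last claim of the lemma. The bijection part is then almost immediate. The forward map $\Phi\colon z\mapsto(e_1(z),\ldots,e_n(z))$ is continuous because each $e_k$ is a polynomial in the entries of $z$. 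For surjectivity onto $\R_+^n$, the fundamental theorem of algebra applied to $h_e$ yields an $n$-element multiset of roots, which by the structural claim is conjugate-closed inside $\R_+\cup(\C\setminus\R)$; sorting this multiset according to the $\C^{n\uparrow}$-convention produces a unique element of $M$ mapping to $e$. Injectivity follows because any two elements of $M$ mapping to the same $e$ must each be the sorted form of the common root multiset of $h_e$, and hence coincide.

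The main effort lies in proving continuity of $\varphi$, which I expect to be the principal obstacle. The standard ingredient is the classical fact that the roots of a monic polynomial depend continuously on its coefficients as an unordered multiset, which can be justified by a short application of Rouch\'e's theorem to small disks around each root of the limit polynomial; equivalently, $\Phi$ factors as a homeomorphism $\C^n/S_n\to\C^n$ of symmetric product with coefficient space, restricted to the relevant subsets. The delicate step is to check that the $\C^{n\uparrow}$-sorting is compatible with this convergence. Away from coincidences in real parts this is automatic, and at the remaining degeneracies (a double positive real root splitting into either two distinct positive reals or into a non-real conjugate pair, and more generally collisions between conjugate pairs at equal real parts) I would perform a case analysis using the conjugate-closed structure: within each pair $(z,\bar z)$ the element with positive imaginary part is always listed first, so once the real parts of the pairs are ordered correctly the internal orderings match up. I would reduce the proof of continuity to these finitely many local patterns and verify that the sorted limit agrees in each.
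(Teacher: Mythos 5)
Your treatment of the structural claim and of bijectivity is correct and essentially the same as the paper's (the paper proves positivity of the real roots by the same sign argument for $h_e(-t)$, and for continuity of $\varphi$ it does no work at all, simply citing Cucker--Corbalan). The genuine gap is in your continuity step, and it is not where you expect it: the delicate configurations are not only root collisions but \emph{real-part crossings without any collision}, and there the ``sorted limit'' does \emph{not} agree, so the case analysis you propose cannot be completed. Concretely, take $n=3$ and roots $a+b\mathrm{i},\,a-b\mathrm{i},\,c$ with $a,b,c>0$; then $e_1=2a+c$, $e_2=a^2+b^2+2ac$, $e_3=(a^2+b^2)c$ are all positive, so these configurations lie in $M$ for every $c$. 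With the lexicographic convention defining $\C^{3\uparrow}$, the ordered representative is $(c,\,a+b\mathrm{i},\,a-b\mathrm{i})$ for $c>a$, $(a+b\mathrm{i},\,a-b\mathrm{i},\,c)$ for $c<a$, and $(a+b\mathrm{i},\,a,\,a-b\mathrm{i})$ for $c=a$. As $c\downarrow a$ the ordered vectors converge to $(a,\,a+b\mathrm{i},\,a-b\mathrm{i})$, and as $c\uparrow a$ to $(a+b\mathrm{i},\,a-b\mathrm{i},\,a)$; neither equals the value at $c=a$. So the map into the ordered space $\C^{n\uparrow}$ is actually discontinuous at such points, your list of degeneracies (double roots splitting, conjugate pairs colliding) omits exactly these crossing configurations, and your stated plan to ``verify that the sorted limit agrees in each'' pattern fails there.

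What your Rouch\'e argument (equivalently, the homeomorphism $\C^n/S_n\to\C^n$) does give, and what is all that is needed downstream in the paper, is continuity of the \emph{unordered root multiset} in the coefficients. From this, every continuous symmetric function of the roots, in particular $f\circ\varphi$, is continuous on $\R_+^n$, and $\varphi$ itself is continuous at every $e$ whose roots are all real --- the only pointwise convergence of $\varphi$ used later (namely $x^m=\varphi(e^m)\to x$ with $x\in\R_+^n$ in the final limiting argument). So the fix is either to prove and use continuity in this multiset sense (which is exactly what the paper's citation supplies), or to weaken/reformulate the ordering claim, e.g.\ restrict the continuity assertion for the ordered representative to points away from the real-part-tie configurations exhibited above; as written, the strong statement you set out to prove is not attainable by the route you describe.
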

\begin{proof}

The elementary symmetric polynomials $e_1(z),\ldots,e_n(z)$ evaluated at $z$ are exactly the coefficients $e_1,\ldots,e_n$ of the polynomial $h_e$ with the roots $z_1,\ldots,z_n$. The elementary symmetric polynomials are obviously continuous.

On the other hand, applying the fundamental theorem of algebra, we know that $h_e$ has exactly $n$ complex roots, all of which are either real or complex conjugate pairs. It is easy to see that all real roots must be positive: since the polynomial $h_e(-t)\ =\ t^n + \sum_{k=1}^ne_k\,t^{n-k} >0$ for all $x\in\R_+$, because all $e_k$ are positive. Thus $h_e(-t)$ has no positive and therefore $h_e(t)$ has no negative real roots. A proof of the continuity of $\varphi$ is shown in \cite{cucker1989alternate}.
\end{proof}

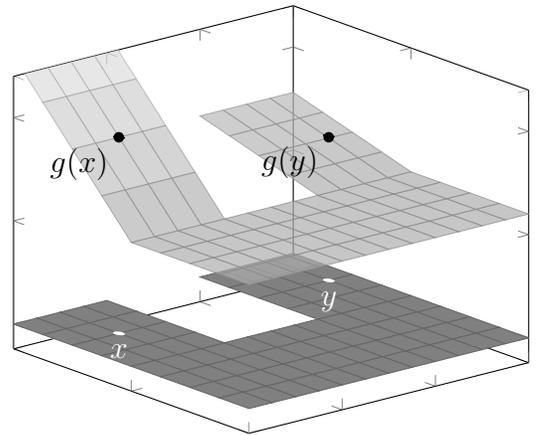
\begin{wrapfigure}[12]{r}{0.45\textwidth}
	\begin{center}\vspace{-2.0em}
    \begin{tikzpicture}
        \begin{scope}[scale=1]

\begin{scope}
\begin{axis}[samples=5, view={-40}{22},zmax=12,colormap={bw}{gray(0cm)=(0.5); gray(1cm)=(0.9)}, xticklabels = , yticklabels = , zticklabels = ]
\addplot3[surf, domain=0:1, y domain=0:1] {0};
\addplot3[surf, domain=1:2, y domain=0:1] {0};
\addplot3[surf, domain=2:3, y domain=0:1] {0};
\addplot3[surf, domain=0:1, y domain=1:2] {0};
\addplot3[surf, domain=2:3, y domain=1:2] {0};
\fill[white] (50,150,0) circle (4) node[below] {$x$};
\fill[white] (275,150,0) circle (4) node[below] {$y$};
\addplot3[surf, domain=0:1, y domain=0:1, opacity=0.7] {0+6};
\addplot3[surf, domain=1:2, y domain=0:1, opacity=0.7] {0+6};
\addplot3[surf, domain=2:3, y domain=0:1, opacity=0.7] {0+6};
\addplot3[surf, domain=0:1, y domain=1:2, opacity=0.7] {7*y-7+6};
\addplot3[surf, domain=2:3, y domain=1:2, opacity=0.7] {1.8*y-2+6.2};
\fill (50,150,95) circle (4) node[below left] {$g(x)$};
\fill (275,150,70) circle (4) node[below left] {$g(y)$};
\end{axis}
\end{scope}
\fill (1.4,3.94) circle (.07);
\fill (4.19,3.94) circle (.07);

\end{scope}
    \end{tikzpicture}
    \vspace*{-2em}
	\end{center}
	\caption{\label{fig:counterExampleGraph}The graph of a function $g$ with non-negative partial derivatives but non-convex domain of definition; note that $g(x)>g(y)$, although $x_i\leq y_i$ for $i\in\{1,2\}$.}
\end{wrapfigure}

We now come to the proof of the SSLI. The main idea has already been pursued in prior attempts to prove the inequality: instead of directly working with the function $f(z)\colonequals\sum_{i=1}^n(\log z_i)^2$ on the set $M$ of roots, we consider the composition $f\circ \varphi$ which depends on the elements $e\in T$ of a suitable set of coefficients $T\subseteq \R_+^n$.
Of course, we have to choose $T$ in a way such that $(f\circ \varphi)(e)\in\R$ for all $e\in T$.

The proof of the SSLI now can be divided into two steps:
\begin{itemize}
\item[1.)] We show that $\displaystyle \frac{\partial (f\circ\varphi)}{\partial e_k} \geq 0$\,.
\item[2.)] We find a path $\gamma\colon [0,1]\to \varphi(T)$ with $\gamma(0)=x$, $\gamma(1)=y$ such that $\displaystyle\dd{s}e_k\bigl(\gamma(s)\bigr)\geq 0$ for all $s\in(0,1)$ and $k\in\{1,\ldots,n-1\}$ as well as $\displaystyle\dd{s}e_n\bigl(\gamma(s)\bigr)=0$ for all $s\in(0,1)$\,.
\end{itemize}

Note carefully that condition 1.) alone is not sufficient. To understand this, let us consider the graph of a function $g\colon D\to\R$ with a non-convex domain $D\subseteq\R^n$ as shown in Figure \ref{fig:counterExampleGraph}.

As we see, even though the function only has non-negative partial derivatives, it is \emph{not} true that $g(x)\leq g(y)$ for each pair $x,y\in D$ such that (componentwise) $x\leq y$. In particular, we cannot find a path connecting $x$ to $y$ which is increasing in all components.

We therefore want to choose an appropriate domain $T\subseteq \R_+^n$ in order to prevent these complications. The problem is that for a too restricted choice of $T$, we do not easily find suitable paths satisfying condition 2.), while if $T$ is chosen too large, it becomes more difficult to prove that condition 1.) holds on all of $T$.

In \cite{PompeNeff2015}, Pompe and Neff managed to prove the SSLI for $n=4$ by choosing
\begin{equation}
T\ =\ \bigl\{\,\bigl( e_1(z),\ldots,e_n(z)\bigr)\ |\ z\in\R_+^n\, \bigr\}\,;
\end{equation}
the authors in fact do not to limit the inequality to the function $f(z)\colonequals\sum_{i=1}^n(\log z_i)^2$, but to prove it for a whole class of functions. For these $f$, they show condition 1.), i.e.\ the non-negativity of the partial derivatives with respect to the $k$-th elementary symmetric polynomial, by showing for each point on the path from condition 2.) that
\begin{equation}
\widehat D\ \colonequals\ \sum_{j=1}^nf'(z_j)(-z_j)^{n-k}\biggl(\prod_{\underset{j\neq i}{j=1}}^n(z_j-z_i)\biggr)^{-1}\ \geq\ 0\,.
\end{equation}
Although their choice of the domain $T$ is not convex, they demonstrate the existence of paths from $x$ to $y$ without constructing them explicitly: they show that a path which satisfies condition 2.) can be continued and must therefore reach $y$ after starting at $x$. However, for $n>4$, this method seems impractical due to the amount of computational effort required. 

In this paper, we consider the convex set 
\begin{equation}
T\colonequals\R_+^n\,.
\end{equation}
By this choice, satisfying condition 2.) is rather trivial; we take the straight line $\tilde \gamma$ in $T$ from $e(x)$ to $e(y)$, i.e.\ $\tilde\gamma(s)\colonequals s\cdot e(x) + (1-s)\cdot e(y)$, and consider the curve $\gamma:[0,1]\to M$ with $\gamma(s)=\varphi(\tilde\gamma(s))$. Of course, it is difficult to explicitly characterize $\gamma$, but this is not necessary for our proof. It therefore only remains to verify condition 1.), which requires some more elaborate methods.

{\abovedisplayskip=4pt\belowdisplayskip=4pt
As indicated earlier, we define the function
\begin{equation}\label{eq:deff}
f\colon (\C\setminus(\R_- \cup \{0\})^n \to \C\qquad\textrm{with $f(z_1,\ldots,z_n) \colonequals \sum_{i=1}^n(\log z_i)^2$}\,,
\end{equation}}
where $\log$ denotes the main branch of the complex logarithm, which is defined for $\C\setminus (-\infty,0]$.

\begin{lemma}
The composition $f\circ \varphi$ is a real-valued function on $\R_+^n$, and takes the form
\begin{equation*}
f\circ \varphi\colon \R_+^n\to \R\qquad\textrm{with $(f\circ \varphi)(e) \colonequals f\bigl(\varphi_1(e),\ldots,\varphi_n(e)\bigr)$}\,.
\end{equation*}
\end{lemma}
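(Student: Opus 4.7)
My plan is to apply Lemma~\ref{defphi} directly: for any $e\in\R_+^n$, the vector $\varphi(e)=(z_1,\ldots,z_n)$ consists entirely of positive real numbers and pairs of complex conjugates. Since each $z_i$ is either a positive real or a non-real complex number, no $z_i$ lies in $\R_-\cup\{0\}$, so every $z_i$ is in the domain of the principal branch of $\log$, and hence $\varphi(\R_+^n)\subseteq(\C\setminus(\R_-\cup\{0\}))^n$. This shows that $f\circ\varphi$ is well-defined on $\R_+^n$.

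To verify that $(f\circ\varphi)(e)$ is real, I would split the sum $\sum_{i=1}^n(\log z_i)^2$ into two parts according to the structure given by Lemma~\ref{defphi}: the contribution from the real roots and the contribution from the conjugate pairs. For each real $z_i>0$, the term $(\log z_i)^2$ is manifestly real. For each conjugate pair $(z,\bar z)$ with $z\in\C\setminus\R$, I would use the identity $\log\bar z=\overline{\log z}$, which holds for the principal branch on $\C\setminus(-\infty,0]$ (and applies here since $z\notin\R$ implies $z\notin(-\infty,0]$). Squaring yields $(\log\bar z)^2=\overline{(\log z)^2}$, and therefore
\begin{equation*}
(\log z)^2+(\log\bar z)^2\ =\ 2\real\bigl((\log z)^2\bigr)\ \in\ \R.
\end{equation*}
Adding up the real contributions from the real roots and the real numbers $2\real((\log z)^2)$ from each conjugate pair shows that the total sum lies in $\R$.

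There is no substantive obstacle here: the only thing to watch is that one must pair the roots correctly (so that each non-real root is paired with its conjugate and not double-counted), which is ensured by the structural statement at the end of Lemma~\ref{defphi}, together with the fact that $e_n(\varphi(e))>0$ excludes the possibility of a zero root. In short, the lemma is an immediate corollary of Lemma~\ref{defphi} combined with the conjugation-equivariance of the principal logarithm.
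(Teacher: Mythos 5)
Your proposal is correct and follows essentially the same route as the paper: invoke Lemma~\ref{defphi} to get that the roots are positive reals or complex conjugate pairs, note $(\log z_i)^2\in\R$ for positive real roots, and use $\log\bar z=\overline{\log z}$ (hence $(\log\bar z)^2=\overline{(\log z)^2}$) so that each conjugate pair contributes a real number. Your additional remarks on well-definedness (no root lies in $\R_-\cup\{0\}$) are a welcome explicit check of what the paper leaves implicit, but the argument is the same.
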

\begin{proof}

From Lemma \ref{defphi} we already know that the components of $\bigl(\varphi_1(e),\ldots,\varphi_n(e)\bigr)$ are either real or pairs of complex conjugates. For such pairs,
\begin{equation*}
(\log \varphi_i(e))^2 + (\log \overline{\varphi_i(e)})^2\ =\ (\log \varphi_i(e))^2 + (\overline{\log \varphi_i(e)})^2\ =\ (\log \varphi_i(e))^2 + \overline{(\log \varphi_i(e))^2}\ \in \R\,.
\end{equation*}
Since $(\log \varphi_i(e))^2$ is real-valued for all real components $\varphi_i(e)$ as well, it follows that
\begin{equation*}
(f\circ \varphi)\bigl(e_1(z),\ldots,e_n(z)\bigr) = \sum_{i=1}^n(\log z_i)^2 \in\R\,.\qedhere
\end{equation*}
\end{proof}

In order to prove condition 1.), we need to compute the inner derivative of the composition $f\circ \varphi$:
\begin{proposition}\label{diffableprop}
Let $e\colonequals(e_1,\ldots,e_n)\in\R_+^n$ be such that all components of $\varphi(e)$ (i.e.\ the roots of $h_e$) are pairwise different. Then $\varphi$ is differentiable at $e$ with partial derivative
\begin{equation*}
\frac{\partial \varphi_i}{\partial e_k}(e)\ =\ \frac{(-1)^{k+1}\,\varphi_i(e)^{n-k}}{\prod_{\underset{j\neq i}{j=1}}^n\bigl(\varphi_i(e) - \varphi_j(e)\bigr)}\qquad\textrm{for any $k\in\{1,\ldots,n\}$}\,.
\end{equation*}
\end{proposition}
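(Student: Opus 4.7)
The plan is to apply the holomorphic implicit function theorem to the defining equation $h_e(\varphi_i(e)) = 0$, which holds because $\varphi_i(e)$ is by construction a root of the characteristic polynomial $h_e$.

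First I would observe that since $\varphi$ is continuous (Lemma~\ref{defphi}) and the roots of $h_e$ are pairwise different at the point $e$ of interest, there is an open neighborhood $U \subseteq \R_+^n$ of $e$ on which all $n$ roots remain pairwise different. On $U$, each $\varphi_i$ can locally be singled out as the unique root of $h_{e'}$ lying near $\varphi_i(e)$, and the map $(e',t) \mapsto h_{e'}(t)$ is jointly analytic. The crucial nondegeneracy condition for the implicit function theorem is $\partial_t h_e(\varphi_i(e)) \neq 0$. This is where I would use the factorization
\begin{equation*}
h_e(t) \;=\; \prod_{j=1}^n \bigl(t - \varphi_j(e)\bigr)\,,
\end{equation*}
differentiating to get $h_e'(t) = \sum_{\ell=1}^n \prod_{j \neq \ell} (t - \varphi_j(e))$, and then evaluating at $t = \varphi_i(e)$: all terms with $\ell \neq i$ contain the vanishing factor $(\varphi_i(e) - \varphi_i(e))$, so only the $\ell = i$ term survives, giving $h_e'(\varphi_i(e)) = \prod_{j \neq i} (\varphi_i(e) - \varphi_j(e))$. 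This is nonzero exactly because the roots are pairwise distinct.

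Next I would compute the partial derivative with respect to $e_k$ directly from the coefficient form
\begin{equation*}
h_e(t) \;=\; t^n + \sum_{\ell=1}^n (-1)^\ell\, e_\ell\, t^{n-\ell}\,,
\end{equation*}
which gives $\partial h_e / \partial e_k \,(t) = (-1)^k t^{n-k}$. Implicitly differentiating the identity $h_e(\varphi_i(e)) \equiv 0$ with respect to $e_k$ yields
\begin{equation*}
h_e'(\varphi_i(e)) \cdot \frac{\partial \varphi_i}{\partial e_k}(e) \;+\; (-1)^k\, \varphi_i(e)^{n-k} \;=\; 0\,.
\end{equation*}
Solving for the partial derivative and substituting the expression for $h_e'(\varphi_i(e))$ produces exactly the claimed formula
\begin{equation*}
\frac{\partial \varphi_i}{\partial e_k}(e) \;=\; \frac{(-1)^{k+1}\,\varphi_i(e)^{n-k}}{\prod_{j \neq i}\bigl(\varphi_i(e) - \varphi_j(e)\bigr)}\,.
\end{equation*}

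There is no genuinely hard step here; the only subtlety is making sure the implicit function theorem is legitimately available, which amounts to checking that the roots are distinct (given) and that $\varphi_i$ is well-defined as a holomorphic branch near $e$ (which follows from the continuity in Lemma~\ref{defphi} together with the distinctness of roots). Everything else is a short calculation. I would expect the write-up to be essentially a two-line derivation preceded by a brief justification of the applicability of implicit differentiation.
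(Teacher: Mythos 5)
Your proposal is correct and follows essentially the same route as the paper: implicit differentiation of $h_e(\varphi_i(e))=0$, with the nondegeneracy condition $\partial_t h_e(\varphi_i(e))=\prod_{j\neq i}(\varphi_i(e)-\varphi_j(e))\neq 0$ guaranteed by the simplicity of the roots, and $\partial h_e/\partial e_k(t)=(-1)^k t^{n-k}$ yielding the stated formula. Your additional remark that the roots remain pairwise distinct in a neighborhood of $e$ (so that each $\varphi_i$ is a well-defined branch) is a small extra justification the paper leaves implicit, but it does not change the argument.
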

\begin{proof}
Using the notation $h(e_1,\ldots,e_n,t)\colonequals h_e(t)$, we can characterize the functions $\varphi_i$, $i\in\{1,\ldots,n\}$, implicitly by the equation
\begin{equation*}
0\ =\ h\bigl(e_1,\ldots,e_n,\varphi_i(e_1,\ldots,e_n)\bigr)\qquad\textrm{for all $e=(e_1,\ldots,e_n)\in\R_+^n$}\,.
\end{equation*}
According to the implicit function theorem, $\varphi_i$ is differentiable at a point $e\in\R_+^n$ if $\frac{\partial h}{\partial t}\bigl(e,\varphi_i(e)\bigr) \neq 0$. By assumption, the roots $\varphi_1(e),\ldots,\varphi_n(e)$ of $h_e$ are pairwise different; in particular, the root $\varphi_i(e)$ is simple and therefore $\frac{\partial}{\partial t}h(e,\varphi_i(e))\neq 0$. We differentiate
\begin{align}
0\ =\ \dd{e_k}h\bigl(e,\varphi_i(e)\bigr)\ &=\ \sum_{j=0}^{n-1}\left(\frac{\partial h}{\partial e_j}\right)(e,\varphi_i(e))\cdot \delta_{j,k} + \frac{\partial h}{\partial t}\bigl(e,\varphi_i(e)\bigr)\cdot\frac{\partial \varphi_i}{\partial e_k}(e)\\
&=\ (-1)^k\varphi_i(e)^{n-k}\cdot 1 + \frac{\partial h}{\partial t}\bigl(e,\varphi_i(e)\bigr)\cdot\frac{\partial \varphi_i}{\partial e_k}(e)\,,\notag
\end{align}
and rearrangement yields the desired equality, since $\frac{\partial h}{\partial t}\bigl(e,\varphi_i(e)\bigr) = \prod_{\underset{j\neq i}{j=1}}^n\bigl(\varphi_i(e) - \varphi_j(e)\bigr)$.
\end{proof}

After this preliminary work, we can formulate condition 1.) in our context:

\begin{conjecture}\label{parpos}
Let $e=(e_1,\ldots,e_n)\in\R_+^n$. Then the composition $f\circ \varphi$ is differentiable and the partial derivatives $\frac{\partial(f\circ\varphi)}{\partial e_k}(e)$ for $k\in\{1,\ldots,n-1\}$ are positive, i.e.
\begin{equation*}
\frac{\partial(f\circ\varphi)}{\partial e_k}(e)\ >\ 0\qquad\textrm{for all $e\in\R_+^n$ and any $k\in\{1,\ldots,n-1\}$}\,.
\end{equation*}
\end{conjecture}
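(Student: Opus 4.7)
The plan is to combine Proposition~\ref{diffableprop} with the two identities \eqref{pfd} and \eqref{pfd2} via an integral representation of the complex logarithm. I would first treat the generic locus where the roots $z_1,\dots,z_n$ of $h_e$ are pairwise distinct (on which Proposition~\ref{diffableprop} applies directly) and then extend to all of $\R_+^n$ by a continuity argument.

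On the generic locus, the chain rule combined with Proposition~\ref{diffableprop} yields
\begin{equation*}
\frac{\partial(f\circ\varphi)}{\partial e_k}(e) \;=\; \sum_{i=1}^n \frac{2\log z_i}{z_i}\cdot\frac{(-1)^{k+1}\,z_i^{n-k}}{\prod_{j\neq i}(z_i-z_j)} \;=\; 2(-1)^{k+1}\sum_{i=1}^n \frac{\log z_i\cdot z_i^{n-k-1}}{\prod_{j\neq i}(z_i-z_j)}\,.
\end{equation*}
By Lemma~\ref{defphi}, every $z_i$ lies in $\C\setminus(-\infty,0]$, so (by a short argument showing that $z_i+t$ stays off the negative real axis for $t\geq 0$, using that either $z_i>0$ or $\imag z_i\neq 0$) the principal logarithm admits the resolvent representation
\begin{equation*}
\log z_i \;=\; \int_0^\infty\!\Bigl(\frac{1}{1+t} \,-\, \frac{1}{z_i+t}\Bigr)\,dt\,.
\end{equation*}

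Substituting this representation and interchanging summation with integration (justified by a uniform $O(t^{-2})$ tail bound on each summand) produces two sums weighted against $dt$. The $\frac{1}{1+t}$-contribution is proportional to $\sum_i \frac{z_i^{n-k-1}}{\prod_{j\neq i}(z_i-z_j)}$, which vanishes by Corollary \eqref{pfd2} precisely because $n-k-1\in\{0,\dots,n-2\}$ in the relevant range $k\in\{1,\dots,n-1\}$. The remaining $\frac{1}{z_i+t}$-contribution is, up to a sign, the left-hand side of Lemma \eqref{pfd} with $s\colonequals-t$ and exponent $n-k-1$; applying that identity collapses the sum to a single fraction. After tracking the signs, the result should be the clean formula
\begin{equation*}
\frac{\partial(f\circ\varphi)}{\partial e_k}(e) \;=\; 2\int_0^\infty \frac{t^{n-k-1}}{\prod_{j=1}^n(t+z_j)}\,dt \;=\; 2\int_0^\infty \frac{t^{n-k-1}}{t^n+e_1t^{n-1}+e_2t^{n-2}+\cdots+e_n}\,dt\,,
\end{equation*}
where the second equality is Vieta's formula applied to $\prod_j(t+z_j)$.

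Positivity is then immediate: for every $t\in(0,\infty)$ the numerator is strictly positive and the denominator is a polynomial in $t$ all of whose coefficients are strictly positive (since each $e_\ell>0$), hence positive; integrability at infinity follows from the $t^{-k-1}$ decay for $k\geq 1$. The main obstacle I anticipate is not the positivity argument but rather the extension from the distinct-root locus to the discriminant locus, where Proposition~\ref{diffableprop} breaks down and the formula on which the chain rule computation rests is unavailable. The cleanest remedy is to observe that the right-hand side of the displayed formula is a manifestly continuous (in fact real-analytic) function of $e\in\R_+^n$, and then to upgrade $f\circ\varphi$ to a $C^1$ function on all of $\R_+^n$ so that the identity (and hence strict positivity) extends by density; alternatively, for the subsequent application to Theorem~\ref{theossli}, it suffices to perturb the straight-line path $\tilde\gamma$ to avoid the codimension-one discriminant locus and to pass to the limit at the very end.
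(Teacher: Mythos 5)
Your computation on the simple-root locus is correct and is in substance the paper's own argument there (Lemma \ref{lem1} combined with Lemma \ref{lem2}): you arrive at exactly the same representation
\begin{equation*}
\frac{\partial(f\circ\varphi)}{\partial e_k}(e)\ =\ 2\int_0^\infty\frac{t^{n-k-1}}{\prod_{j=1}^n\bigl(t+\varphi_j(e)\bigr)}\,\mathrm{d}t\ =\ 2\int_0^\infty\frac{t^{n-k-1}}{t^n+e_1t^{n-1}+\cdots+e_n}\,\mathrm{d}t\ >\ 0\,,
\end{equation*}
the only real difference being the direction of travel: you insert the resolvent formula $\log z=\int_0^\infty\bigl(\frac{1}{1+t}-\frac{1}{z+t}\bigr)\mathrm{d}t$ into the chain-rule sum and then invoke \eqref{pfd2} and \eqref{pfd}, whereas the paper starts from the integral, applies \eqref{pfd}, and analyses the boundary terms $\log(t+z_i)$. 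Your sign bookkeeping checks out, and splitting off the $\frac{1}{1+t}$ part is legitimate because its coefficient vanishes pointwise in $t$ by \eqref{pfd2}, before any integration of individually divergent pieces.

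The gap is the discriminant locus, and the statement you are proving explicitly includes it: Conjecture \ref{parpos} asserts differentiability and positivity at \emph{every} $e\in\R_+^n$, also where $h_e$ has multiple roots and Proposition \ref{diffableprop} is unavailable. Observing that the right-hand side is real-analytic in $e$ and claiming the identity ``extends by density'' is not yet an argument: differentiability of $f\circ\varphi$ at a multiple-root point does not follow merely from knowing its partials on a dense open set together with a continuous candidate limit --- that implication is precisely what must be proved. It can be repaired: since $e_n>0$ excludes $0$ as a root, one checks that a line through $e$ in the $e_k$-direction ($k\le n-1$) meets the multiple-root set in only finitely many points, and then a mean-value argument along that line, using continuity of $f\circ\varphi$ and of the candidate derivative, gives differentiability with the stated value; but you would have to supply this. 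Note that the paper itself does something different in each of its two treatments: in Section \ref{section:proof} it deliberately proves only the simple-root case and compensates with Proposition \ref{endlnst} and Lemmas \ref{sslipaarwuntersch} and \ref{sslistrikt} --- this matches your fallback of perturbing the path, which yields the SSLI but not the conjecture as stated --- while the conjecture in full generality is proved in Lemma \ref{borisov} by writing the function as a contour integral of $\bigl(\log(-z)\bigr)^2\,\widehat{h}_a'(z)/\widehat{h}_a(z)$ via the generalized argument principle and differentiating under the integral, a representation that is insensitive to root multiplicities and yields $2\int_0^{\infty}t^{k-1}/\widehat{h}_a(t)\,\mathrm{d}t>0$ directly.
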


We will prove Conjecture \ref{parpos} under the additional assumption that all components of $\varphi(e)$ are \emph{pairwise different} (i.e.\ that $h_e$ has only simple roots). With this restriction, the assertion does not imply condition 1.), but we will compensate this drawback with some additional work in Proposition \ref{endlnst} as well as Lemmas \ref{sslipaarwuntersch} and \ref{sslistrikt}.
\begin{lemma}\label{lem1}
Let $e=(e_1,\ldots,e_n)\in\R_+^n$ be such that all components of $\varphi(e)$ are \emph{pairwise different}. Then
\begin{equation}\label{partablsumme}
\frac{\partial(f\circ \varphi)}{\partial e_k}(e)\ =\ -2\,\sum_{i=1}^n\frac{\bigl(-\varphi_i(e)\bigr)^{n-k-1}}{\prod_{\underset{j\neq i}{j=1}}^n\bigl(\varphi_j(e)-\varphi_i(e)\bigr)}\log \varphi(e)\,.
\end{equation}
\end{lemma}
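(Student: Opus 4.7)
The plan is to compute $\frac{\partial(f\circ\varphi)}{\partial e_k}(e)$ by a direct application of the chain rule. Since $f$ defined in~(\ref{eq:deff}) is holomorphic in each argument on $\C\setminus(-\infty,0]$, and, by Proposition~\ref{diffableprop}, each component $\varphi_i\colon \R_+^n\to\C$ is differentiable at $e$ (when viewed via its real and imaginary parts), combining the ordinary real chain rule with the Cauchy--Riemann equations for $f$ in each slot yields
\begin{equation*}
\frac{\partial(f\circ\varphi)}{\partial e_k}(e)\ =\ \sum_{i=1}^n \frac{\partial f}{\partial z_i}\bigl(\varphi(e)\bigr)\cdot\frac{\partial\varphi_i}{\partial e_k}(e)\,.
\end{equation*}

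Next I would differentiate $f$ explicitly as $\frac{\partial f}{\partial z_i}(z) = \frac{2\log z_i}{z_i}$ and insert the formula for $\frac{\partial\varphi_i}{\partial e_k}$ supplied by Proposition~\ref{diffableprop}. After cancelling one factor of $\varphi_i(e)$, the $i$-th summand collapses to
\begin{equation*}
\frac{2\,(-1)^{k+1}\,\varphi_i(e)^{n-k-1}\,\log \varphi_i(e)}{\prod_{j\neq i}\bigl(\varphi_i(e)-\varphi_j(e)\bigr)}\,.
\end{equation*}

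Finally, I would reverse the order of the factors in the denominator via $\prod_{j\neq i}(\varphi_i-\varphi_j)=(-1)^{n-1}\prod_{j\neq i}(\varphi_j-\varphi_i)$ and rewrite the numerator using $\varphi_i^{n-k-1}=(-1)^{n-k-1}(-\varphi_i)^{n-k-1}$. Multiplying the three sign factors gives
\begin{equation*}
(-1)^{k+1}\cdot(-1)^{n-1}\cdot(-1)^{n-k-1}\ =\ (-1)^{2n-1}\ =\ -1\,,
\end{equation*}
which produces exactly the $-2$ prefactor on the right-hand side of~(\ref{partablsumme}) and yields the claimed identity after summation over $i$. The main obstacle is genuinely only sign bookkeeping together with the mild analytic remark that the chain rule may be invoked for a smooth $\C$-valued function of real parameters composed with a complex-holomorphic function; there is no deeper analytic difficulty in this lemma, and the identity is essentially a one-line computation once the derivative from Proposition~\ref{diffableprop} is in hand.
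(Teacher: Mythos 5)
Your proposal is correct and follows essentially the same route as the paper: apply the chain rule, insert $\frac{\partial f}{\partial z_i}(z)=\frac{2\log z_i}{z_i}$ together with the formula for $\frac{\partial\varphi_i}{\partial e_k}$ from Proposition~\ref{diffableprop}, and then rearrange the signs via $\prod_{j\neq i}(\varphi_i-\varphi_j)=(-1)^{n-1}\prod_{j\neq i}(\varphi_j-\varphi_i)$ and $\varphi_i^{n-k-1}=(-1)^{n-k-1}(-\varphi_i)^{n-k-1}$, exactly as in the paper's computation. Your sign count $(-1)^{k+1}(-1)^{n-1}(-1)^{n-k-1}=-1$ is correct, and your added remark justifying the chain rule for the composition of a holomorphic function with real-differentiable components is a harmless (indeed slightly more careful) elaboration of what the paper leaves implicit.
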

\begin{proof}
Because the components of $\varphi(e)$ are pairwise different, $\varphi$ is differentiable at $e$ according to Proposition \ref{diffableprop}. Then $f\circ \varphi$ is differentiable at $e$ as well, and we can determine the partial derivatives for $k\in\{1,\ldots,n\}$ using the chain rule:
\begin{align}
\frac{\partial(f\circ \varphi)}{\partial e_k}(e)\ &=\ \sum_{i=1}^n\frac{\partial f}{\partial z_i}\bigl(\varphi_1(e),\ldots,\varphi_n(e)\bigr)\,\frac{\partial \varphi_i}{\partial e_k}(e_1,\ldots,e_n)\notag\\
&=\ \sum_{i=1}^n\frac{2\,\log \varphi_i(e)}{\varphi_i(e)}\,\frac{(-1)^{k+1}\,\varphi_i(e)^{n-k}}{\prod_{\underset{j\neq i}{j=1}}^n\bigl(\varphi_i(e)-\varphi_j(e)\bigr)}\notag\\
&=\ 2\,\sum_{i=1}^n\frac{(-1)^{n+k}\,\varphi_i(e)^{n-k-1}}{(-1)^{n-1}\prod_{\underset{j\neq i}{j=1}}^n\bigl(\varphi_i(e)-\varphi_j(e)\bigr)}\log \varphi_i(e)\\
&=\ -2\,\sum_{i=1}^n\frac{\bigl(-\varphi_i(e)\bigr)^{n-k-1}}{\prod_{\underset{j\neq i}{j=1}}^n\bigl(\varphi_j(e)-\varphi_i(e)\bigr)}\log \varphi_i(e)\,.\qedhere
\end{align}
\end{proof}

{\abovedisplayshortskip=2pt\belowdisplayshortskip=4pt
We now want to show that the partial derivatives of $f\circ\varphi$ are positive. The expression calculated in Lemma \ref{lem1} also appeared in other work. For example, Mitchison and Josza in 2004 point out in the appendix of \cite{mitchison2004towards} that
\begin{equation}
\label{eq:mitchJos}
(-1)^q\,\sum_{i=1}^n\frac{z_i^{n-q}\log z_i}{\prod_{\underset{j\neq i}{j=1}}^n(z_j-z_i)}\ \geq\ 0
\end{equation}
for all $q\in{2,\ldots,n}$. If we let $z_i = \varphi_i(e)$, we seem to have already reached our goal. However, Mitchison und Josza prove inequality \eqref{eq:mitchJos} only for the case that $z_1,\ldots,z_n$ are the eigenvalues of a Gram matrix, which is necessarily symmetric and positive definite. Their ineqality can therefore only be applied to positive real numbers $z_1,\ldots,z_n$, while in our case, $\varphi_1(e),\ldots,\varphi_n(e)$ might also include pairs of complex conjugates.}

We close this gap by showing:

\begin{lemma}\label{lem2}
Let $(z_1,\ldots,z_n)\in M$. Then for all $r\in\{0,\ldots,n-2\}$,
\begin{equation*}
-\sum_{i=1}^n\frac{(-z_i)^r}{\prod_{\underset{j\neq i}{j=1}}^n(z_j-z_i)}\,\log z_i\ =\ \int_0^\infty\frac{t^r}{\prod_{j=1}^n(t+z_j)}\intd t\ \boldsymbol{>}\ 0\,.
\end{equation*}
\end{lemma}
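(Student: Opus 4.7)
The plan is to prove the two assertions—the equality of the sum with the integral, and strict positivity of the integral—independently, with the integral identity being the substantive task. For positivity, the key observation is that by Lemma~\ref{defphi}, the vector $(z_1,\ldots,z_n)\in M$ has only positive elementary symmetric polynomials, so
\[
\prod_{j=1}^n(t+z_j)\ =\ t^n + e_1(z)\,t^{n-1} + \cdots + e_n(z)
\]
is a real polynomial in $t$ with strictly positive coefficients, hence strictly positive for every $t\geq 0$. This makes the integrand nonnegative on $[0,\infty)$ (strictly positive on $(0,\infty)$), while decaying like $O(t^{r-n})=O(t^{-2})$ at infinity, so both convergence of the improper integral and its strict positivity are immediate.

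For the integral identity, I would apply the partial fraction decomposition
\[
\frac{t^r}{\prod_{j=1}^n(t+z_j)}\ =\ \sum_{i=1}^n\frac{(-z_i)^r}{(t+z_i)\prod_{j\neq i}(z_j-z_i)},
\]
which follows (since $r\leq n-2<n$) directly from \eqref{pfd} with $z_j$ replaced by $-z_j$, or equivalently by a residue computation at $t=-z_i$; here the $z_i$ are implicitly assumed pairwise distinct, as otherwise the left-hand side of the stated identity is undefined. Integrating from $0$ to $R$ gives
\[
\int_0^R\frac{t^r}{\prod_j(t+z_j)}\,dt\ =\ \sum_{i=1}^n\frac{(-z_i)^r}{\prod_{j\neq i}(z_j-z_i)}\bigl[\log(R+z_i)-\log z_i\bigr].
\]
Splitting $\log(R+z_i)=\log R+\log(1+z_i/R)$, the $\log R$ contributions have prefactor $\sum_i\frac{(-z_i)^r}{\prod_{j\neq i}(z_j-z_i)}$, which after pulling out the overall sign $(-1)^r(-1)^{n-1}$ equals $\sum_i\frac{z_i^r}{\prod_{j\neq i}(z_i-z_j)}$ and thus vanishes for $r\leq n-2$ by Corollary~\eqref{pfd2}. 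The leftover terms $\log(1+z_i/R)=O(1/R)$ die in the limit $R\to\infty$, leaving exactly the claimed identity.

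The main technical obstacle is handling branches of the complex logarithm, since $M$ admits complex-conjugate components whose real parts are not a priori controlled. I would check two points: (a) each $\log z_i$ is defined, which holds because Lemma~\ref{defphi} forces real components to be strictly positive and complex components to lie off the real axis; and (b) for $R$ sufficiently large, each $R+z_i$ lies in the right half-plane, the horizontal path $\{t+z_i:t\in[0,R]\}$ avoids the branch cut, and $1+z_i/R$ is close enough to $1$ that $\log(R+z_i)=\log R+\log(1+z_i/R)$ holds for principal branches. Once these branch matters are settled, the algebraic cancellation via \eqref{pfd2} and the vanishing of the $O(1/R)$ remainders are routine, and the resulting real-valuedness of the sum on the left follows by the same conjugate-pairing argument used earlier to show $f\circ\varphi$ is real-valued—consistent with the right-hand side being a manifestly real positive integral.
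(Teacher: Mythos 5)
Your proof is correct and follows essentially the same route as the paper: positivity and convergence of the integral from the strictly positive coefficients $e_k(z)$, the partial fraction decomposition \eqref{pfd} applied to $t\mapsto t^r/\prod_j(t+z_j)$, and cancellation of the $\log R$ divergence at the upper limit. The only (minor) differences are that you obtain the vanishing of $\sum_{i}(-z_i)^r\big/\prod_{j\neq i}(z_j-z_i)$ directly from Corollary \eqref{pfd2}, whereas the paper infers it a posteriori from the already-established convergence of the integral, and that your explicit treatment of branch cuts and the finite-$R$ truncation is somewhat more careful than the paper's formal evaluation at the boundary.
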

\begin{proof}
We observe that
\begin{equation*}
\prod_{j=1}^n(t+z_j) \ =\ t^n + \sum_{k=2}^{n-1} e_k(z)\,t^{n-k} + \prod_{j=1}^nz_j \ \geq\ \min\bigg\{t^n,\prod_{j=1}^nz_j\bigg\} \ >\ 0
\end{equation*}
for all $t\in\R_+$. Therefore, since $r\leq n-2$,
\begin{equation}
\int_0^\infty\left|\frac{t^r}{\prod_{j=1}^n(t+z_j)}\right|\intd t\ \leq\ \int_0^1\frac{t^r}{\prod_{j=1}^nz_j}\intd t + \int_1^\infty \underbrace{t^{r-n}}_{\leq t^{-2}}\intd t\ \leq\ \frac 1{\prod_{j=1}^nz_j}\, +\ 1\,.
\end{equation}

Thus the integral converges for $r\in\{0,\ldots,n-2\}$ and, since $\frac{t^r}{\prod_{j=1}^n(t+z_j)}> 0$, its value is positive.

We now use \eqref{pfd} and exchange the order of summation and integration, where in evaluating the integral at its ``upper boundary'' we use that $\log(t+z_i) =\log t+\log(1+\frac{z_i}t)$, whereas this decomposition is not applied at $t=0$:
\begin{align}
&\int_0^\infty \frac{t^r}{\prod_{j=1}^n(t+z_j)}\intd t\ \overset{\mathclap{\eqref{pfd}}}=\ \int_0^\infty\sum_{i=1}^n\frac{(-z_i)^r}{(t+z_i)\prod_{\underset{j\neq i}{j=1}}^n\bigl((-z_i)-(z_j)\bigr)}\intd t\notag\\
&=\ \sum_{i=1}^n\frac{(-z_i)^r}{\prod_{\underset{j\neq i}{j=1}}^n(z_j-z_i)}\ \cdot\ \underbrace{\log(t+z_i)}_{\mathclap{=\log t+\log(1+\frac{z_i}t)}}\ \Bigr|_{t=0}^{t\to\infty}\\
&=\ \lim_{t\to\infty}\Biggl(\sum_{i=1}^n\frac{(-z_i)^r}{\prod_{\underset{j\neq i}{j=1}}^n(z_j-z_i)}\log t + \sum_{i=1}^n\frac{(-z_i)^r}{\prod_{\underset{j\neq i}{j=1}}^n(z_j-z_i)}\left( \log\left(1+\frac{z_i}t\right)\right) - \sum_{i=1}^n\frac{(-z_i)^r}{\prod_{\underset{j\neq i}{j=1}}^n(z_j-z_i)}\log z_i\Biggr)\,.\notag
\end{align}
It is easy to see that $\lim_{t\to\infty}\log\left(1+\frac{z_i}t\right)=0$. 

This immediately implies that $\sum_{i=1}^n\frac{(-z_i)^r}{\prod_{\underset{j\neq i}{j=1}}^n(z_j-z_i)}=0$, because otherwise $\lim_{t\to\infty}\sum_{i=1}^n\frac{(-z_i)^r}{\prod_{\underset{j\neq i}{j=1}}^n(z_j-z_i)}\log t$ would diverge, in contradiction to the already established convergence of the integral. This completes the proof.
\end{proof}

\begin{proof}[Proof of Conjecture \ref{parpos} in the case of pairwise different roots $\varphi_i(e)$.]\ \\
We can now conclude: for all $n-k-1\in\{0,\ldots,n-2\}$, that is for $k\in\{1,\ldots,n-1\}$,
\begin{align*}
\frac{\partial(f\circ \varphi)}{\partial e_k}(e)\ &\overset{\mathrm{Prop. \ref{lem1}}}{=}\ -2\,\sum_{i=1}^n\frac{\bigl(-\varphi_i(e)\bigr)^{n-k-1}}{\prod_{\underset{j\neq i}{j=1}}^n\bigl(\varphi_j(e)-\varphi_i(e)\bigr)}\log \varphi(e)\\ &\overset{\mathrm{Prop. \ref{lem2}}}{=}\ 2\,\int_0^\infty \frac{t^{n-k-1}}{\prod_{j=1}^n(t+\varphi_j(e))}\intd t\ >\ 0\,.\qedhere
\end{align*}
\end{proof}

This shows that condition 1.) holds on nearly the entire domain $T$; the only problems occur for those points $e\in T$ for which $h_e$ has multiple roots. The set $T$ is indeed convex, meaning that we can easily construct a path as described in condition 2.), but since such a path may pass through points with multiple roots, it is not necessarily differentiable everywhere. However, as the next proposition shows, under suitable assumptions a straight line in $T$ contains at most finitely many of these problematic points:
\begin{proposition}\label{endlnst}
Let $p_0$ and $p_1$ be polynomials of degree $n$ such that at least one of them has only simple roots. Then there are only finitely many $s\in[0,1]$ such that the polynomial $p_s\colonequals (1-s)\,p_0+s\,p_1$ has multiple roots.
\end{proposition}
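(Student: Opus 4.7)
The plan is to recognize that ``having a multiple root'' is an algebraic condition on the coefficients, cut out by the discriminant, which is a polynomial in those coefficients. Recall that the discriminant $\Delta(q)$ of a univariate polynomial $q$ of degree $n$ is a polynomial in its coefficients (equivalently, up to sign and a power of the leading coefficient, the resultant of $q$ and $q'$) and vanishes precisely when $q$ has a multiple root.

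Writing $p_0(t) = \sum_{k=0}^n a_k t^k$ and $p_1(t) = \sum_{k=0}^n b_k t^k$, the $k$-th coefficient of $p_s$ is $c_k(s) = (1-s)a_k + sb_k$, which is an affine function of $s$. Substituting $c_0(s),\ldots,c_n(s)$ into the polynomial expression for $\Delta$, I obtain a function
\begin{equation*}
D\colon [0,1]\to\C,\qquad D(s)\ \colonequals\ \Delta(p_s),
\end{equation*}
which is itself a polynomial in $s$ (of degree at most $2n-2$, though a sharp bound is irrelevant here). Without loss of generality, assume $p_0$ has only simple roots, so that $D(0) = \Delta(p_0)\neq 0$; hence $D$ is not the zero polynomial.

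The desired conclusion then follows from the fact that a nonzero univariate polynomial has only finitely many zeros. Since $p_s$ has a multiple root iff $D(s) = 0$, the set of bad parameters $s\in[0,1]$ is finite.

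The only delicate point I expect is a possible degree drop: the leading coefficient $c_n(s) = (1-s)a_n + sb_n$ could vanish for some $s^\ast\in[0,1]$, in which case $p_{s^\ast}$ has degree less than $n$ and the discriminant formula has to be handled carefully at that point. But this can happen for at most one value of $s$, so at worst it contributes a single extra point to the already-finite bad set. In the intended application the polynomials $h_{\tilde\gamma(s)}$ are monic, so this issue does not even arise and the argument goes through cleanly.
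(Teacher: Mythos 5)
Your proof is correct and follows essentially the same route as the paper: both express the discriminant of $p_s$ as a polynomial $D(s)$ in $s$ (since the coefficients of $p_s$ are affine in $s$), observe that $D$ is not identically zero because one endpoint polynomial has only simple roots, and conclude via the finiteness of zeros of a nonzero univariate polynomial. Your extra remark about a possible degree drop when the leading coefficient $(1-s)a_n + s b_n$ vanishes is a careful touch the paper glosses over, but as you note it is harmless here since the relevant polynomials $h_{e}$ are monic.
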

\begin{proof}
Let $p=\alpha\,\prod_{i=1}^n(X-x_i) = \sum_{i=0}^na_i\,X^{n-i}$ be a polynomial of degree $n$ with roots $z_1,\ldots,z_n$ and coefficients $a_0,\ldots,a_n$. The discriminant of $p$ is defined as
\begin{equation}
D(p)\ \colonequals\ \prod_{1\leq i<j\leq n}(z_i-z_j)^2
\end{equation}
and is zero if and only if $p$ has multiple roots (see also \cite[p. 204]{lang2002algebra}). The discriminant is a symmetric homogeneous polynomial in the variables $z_1,\ldots,z_n$ and thus can be expressed as a homogeneous polynomial in $e_k(z_1,\ldots,z_n)$, i.e.\ in the coefficients $a_k$.\footnote{For example, the polynomial $q(t)=t^3-a\,t^2+b\,t-c$ has the discriminant $D(q) = a^2\,b^2-4\,b^3-4\,a^3\,c+18\,a\,b\,c - 27\,c^2$.}

The coefficients of $p_s$ are polynomials in $s$, so $D(s)\colonequals D(p_s)$ is also a polynomial in $s$. If $D(s)$ is the zero polynomial, then both $p_0$ and $p_1$ must have multiple roots. On the other hand, if both $p_0$ and $p_1$ do not have multiple roots, then $D(s)$ is not the zero polynomial and is zero only for finitely many $s\in[0,1]$, and thus $p_s$ can have multiple roots only for finitely many $s\in[0,1]$.
\end{proof}

For vectors $x,y\in\R_+^n$, we can therefore directly prove the SSLI using conditions (1) and (2) only under the additional assumption that at least one of the two vectors has pairwise different components. In order to circumvent this limitation, we first show a strict version of the SSLI:

\begin{lemma}\label{sslipaarwuntersch}
Let $x=(x_1,\ldots,x_n),\ y=(y_1,\ldots,y_n)\in\R_+^n$ be such that $x_1>x_2>\ldots>x_n$, $y_1\geq y_2\geq\ldots\geq y_n$,\; $e_k(x)<e_k(y)$ for all $k\in\{1,\ldots,n-1\}$ and $e_n(x)=e_n(y)$. Then $f(x)<f(y)$ with $f$ as in \eqref{eq:deff}, i.e.
\begin{equation*}
f(x)\ =\ \sum_{i=1}^n(\log x_i)^2\ <\ \sum_{i=1}^n(\log y_i)^2\ =\ f(y)\,.
\end{equation*}
\end{lemma}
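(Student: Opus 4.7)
The plan is to follow the two-step strategy already sketched: exploit the convexity of $T=\R_+^n$ to get an easy path, then invoke the already-proven case of Conjecture \ref{parpos} to control $f\circ\varphi$ along it. Define the straight line $\tilde\gamma\colon[0,1]\to T$ by $\tilde\gamma(s)\colonequals(1-s)\,e(x)+s\,e(y)$, so that $\tilde\gamma(0)=e(x)$ and $\tilde\gamma(1)=e(y)$, lift it to $\gamma(s)\colonequals\varphi(\tilde\gamma(s))\in M$, and consider
\begin{equation*}
F\colon[0,1]\to\R,\qquad F(s)\colonequals(f\circ\varphi)(\tilde\gamma(s))\,.
\end{equation*}
By Lemma \ref{defphi} and the continuity of $f$ on its domain, $F$ is continuous on $[0,1]$, and since $f$ depends only on the multiset of its arguments, $F(0)=f(x)$ and $F(1)=f(y)$. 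The claim thus reduces to showing $F(1)>F(0)$.

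Since $x$ has strictly distinct components, $h_{e(x)}$ has only simple roots, so Proposition \ref{endlnst} (applied with $p_0=h_{e(x)}$, $p_1=h_{e(y)}$) supplies a \emph{finite} set $S\subseteq[0,1]$ outside of which $h_{\tilde\gamma(s)}$ has only simple roots. For every $s\in[0,1]\setminus S$, the components of $\gamma(s)$ are then pairwise different, so Proposition \ref{diffableprop} and the already-established case of Conjecture \ref{parpos} give
\begin{equation*}
\frac{\partial(f\circ\varphi)}{\partial e_k}(\tilde\gamma(s))\ >\ 0\qquad\text{for every $k\in\{1,\ldots,n-1\}$}\,.
\end{equation*}
Combining this with the elementary identity $\frac{d}{ds}\tilde\gamma_k(s)=e_k(y)-e_k(x)$, which is strictly positive for $k\in\{1,\ldots,n-1\}$ and vanishes for $k=n$, the chain rule yields
\begin{equation*}
F'(s)\ =\ \sum_{k=1}^{n-1}\frac{\partial(f\circ\varphi)}{\partial e_k}(\tilde\gamma(s))\,\bigl(e_k(y)-e_k(x)\bigr)\ >\ 0\qquad\text{for every $s\in[0,1]\setminus S$}\,.
\end{equation*}

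To upgrade this pointwise strict positivity to the global strict inequality $F(1)>F(0)$, use that $S$ is finite: it partitions $[0,1]$ into finitely many closed subintervals on each of which $F$ is continuous and $C^1$ in the interior with $F'>0$, hence strictly increasing; summing the strict increments gives $F(1)>F(0)$, which is exactly $f(x)<f(y)$. The only genuine obstacle beyond the results already assembled in this section is the possible failure of $\gamma$ to be differentiable where $\tilde\gamma$ crosses the discriminant locus of polynomials with multiple roots, and this is precisely what Proposition \ref{endlnst} disposes of; no further ingredient is needed.
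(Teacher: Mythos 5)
Your proposal is correct and follows essentially the same route as the paper: the straight-line path in coefficient space, Proposition \ref{endlnst} to reduce the exceptional set to finitely many points, the proven case of Conjecture \ref{parpos} for strict positivity of $F'$ off that set, and continuity plus piecewise monotonicity to conclude $f(x)<f(y)$. If anything, your version is slightly cleaner than the paper's, since you explicitly drop the $k=n$ term (where $e_n(y)-e_n(x)=0$) from the chain-rule sum rather than labeling both factors as positive for all $k$.
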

\begin{proof}
Consider the path $e^s = (e^s_1,\ldots,e^s_n) \subseteq \R_+^n$ for $s\in[0,1]$ with
\begin{equation*}
e_k^s\ \colonequals\ (1-s)\,e_k(x) + s\,e_k(y)\,.
\end{equation*}
Then $e^0=e(x)$ and $e^1=e(y)$ as well as $e_k(x) < e_k(y)$ for all $k\in\{1,\ldots,n-1\}$ and $e_n(x)=e_n(y)$. Since $x_1>x_2>\ldots>x_n$, the polynomial $h_{e^0}$ has pairwise different roots. According to Proposition \ref{endlnst}, there is only a finite number of $s\in[0,1]$ such that $h_{e^s}$ has pairwise different roots. Then by Conjecture \ref{parpos}, $f\circ \varphi$ is differentiable at all but finitely many points along the path $s\mapsto e_s$, and
\begin{equation}
\dd{s}\,f\bigl(\varphi(e_s)\bigr) \ =\ \sum_{k=1}^n\frac{\partial (f\circ \varphi)}{\partial e_k}(e^s)\cdot\frac{\mathrm{d}}{\mathrm{d}s}e_k^s\ =\ \sum_{k=1}^n \; \underbrace{\frac{\partial(f\circ \varphi)}{\partial e_k}(e^s)}_{>0} \cdot \underbrace{\vphantom{\frac{\partial(f\circ \varphi)}{\partial e_k}(e)}\bigl(e_k(y)-e_k(x)\bigr)}_{>0} \ >\ 0\,.
\end{equation}
Thus the continuity of the mapping $s\mapsto (f\circ \varphi)(e_s)$ implies its strict monotonicity on $[0,1]$, and therefore
\begin{equation*}
f(x) = (f\circ \varphi)\bigl(e(x)\bigr) = f\bigl(\varphi(e_0)\bigr) < f\bigl(\varphi(e_1)\bigr) = (f\circ \varphi)\bigl(e(y)\bigr) = f(y)\,.\qedhere
\end{equation*}
\end{proof}

In the next step, we use the continuity of the elementary symmetric polynomials in order to show the strict inequality for those $x,y$ whose components are not pairwise different.

\begin{lemma}\label{sslistrikt}
Let $x=(x_1,\ldots,x_n),\ y=(y_1,\ldots,y_n)\in\R_+^n$ be such that $e_k(x)<e_k(y)$ for all $k\in\{1,\ldots,n-1\}$ and $e_n(x)=e_n(y)$. Then $f(x)<f(y)$ with $f$ as in \eqref{eq:deff}, i.e.
\begin{equation*}
f(x)\ =\ \sum_{i=1}^n(\log x_i)^2\ <\ \sum_{i=1}^n(\log y_i)^2\ =\ f(y)\,.
\end{equation*}
\end{lemma}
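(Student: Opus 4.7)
The plan is to reduce Lemma \ref{sslistrikt} to Lemma \ref{sslipaarwuntersch} by perturbing $x$ to a nearby vector with pairwise distinct components and then passing to the limit, keeping the increment $f(y)-f(x^\delta)$ in integral form so that strict positivity is not lost.

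After ordering $x_1\geq\ldots\geq x_n$ and $y_1\geq\ldots\geq y_n$, I would set $x^\delta_i\colonequals x_i\exp\bigl(\delta(\tfrac{n+1}{2}-i)\bigr)$ for $\delta>0$. Since $\sum_i(\tfrac{n+1}{2}-i)=0$ this preserves $e_n$, and $x^\delta_i/x^\delta_{i+1}=(x_i/x_{i+1})e^\delta\geq e^\delta>1$ shows the components of $x^\delta$ are strictly decreasing (hence pairwise distinct), while $x^\delta\to x$ as $\delta\to 0$. By continuity of each $e_k$ and the strict hypothesis $e_k(x)<e_k(y)$, we still have $e_k(x^\delta)<e_k(y)$ for all $k\in\{1,\ldots,n-1\}$ once $\delta>0$ is small. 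Lemma \ref{sslipaarwuntersch} then applies to $(x^\delta,y)$.

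Rather than using only its conclusion, I would carry along the integral form of the increment from the proof of Lemma \ref{sslipaarwuntersch} combined with Lemma \ref{lem2}. Writing $e^s_\delta\colonequals(1-s)e(x^\delta)+s\,e(y)$, the fundamental theorem of calculus applies to $s\mapsto(f\circ\varphi)(e^s_\delta)$ because, by Proposition \ref{endlnst}, $h_{e^s_\delta}$ has simple roots at all but finitely many $s\in[0,1]$, and at every such $s$ the chain rule together with Lemmas \ref{lem1} and \ref{lem2} identifies the derivative with the integrand below:
\[
f(y)-f(x^\delta)\;=\;2\int_0^1\sum_{k=1}^{n-1}\bigl(e_k(y)-e_k(x^\delta)\bigr)\int_0^\infty\frac{t^{n-k-1}}{\prod_{j=1}^n(t+\varphi_j(e^s_\delta))}\,dt\,ds.
\]

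Now let $\delta\to 0$. Continuity of $\varphi$ (Lemma \ref{defphi}) gives $\varphi_j(e^s_\delta)\to\varphi_j(e^s)$ uniformly in $s$, where $e^s\colonequals(1-s)e(x)+s\,e(y)$; the uniform bound from the proof of Lemma \ref{lem2} lets me transfer the inner integral by dominated convergence, and $f(x^\delta)\to f(x)$ by continuity of $f\circ\varphi$. The resulting identity for $(x,y)$ has a strictly positive integrand, since $e_k(y)-e_k(x)>0$ by hypothesis and $\prod_j(t+\varphi_j(e^s))=t^n+\sum_k e^s_k t^{n-k}>0$ for $t\geq 0$, so $f(y)-f(x)>0$. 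The main obstacle is exactly this passage to the limit: $F\colonequals f\circ\varphi$ need not be differentiable at $e(x)$ when $x$ has repeated roots, so the fundamental theorem of calculus cannot be invoked directly on the segment from $e(x)$ to $e(y)$. Staying inside the integral representation, whose integrand is continuous in $e$ through $\varphi$ regardless of multiplicities, bypasses that difficulty and carries strict positivity through to the limit.
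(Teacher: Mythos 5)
Your proposal is correct, and the overall reduction (perturb $x$ to pairwise distinct components, invoke Lemma~\ref{sslipaarwuntersch}) is the same as in the paper; the genuinely different part is how strictness survives the perturbation. The paper chooses the tie-breaking perturbation so that $f$ itself strictly increases: for an equal pair it replaces $x_i,x_j$ by $x_i(1+\varepsilon)$ and $x_j/(1+\varepsilon)$, which preserves $e_n$, keeps $e_k(x')<e_k(y)$ for small $\varepsilon$, and satisfies $(\log x_i')^2+(\log x_j')^2>(\log x_i)^2+(\log x_j)^2$, so the chain $f(x)<f(x')<f(y)$ follows from Lemma~\ref{sslipaarwuntersch} with no limiting argument at all. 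You instead use a generic $e_n$-preserving exponential tilt and recover strictness by keeping the increment $f(y)-f(x^\delta)$ in the integral form coming from Lemmas~\ref{lem1} and~\ref{lem2} and letting $\delta\to 0$; this needs more machinery (uniform continuity of $\varphi$ on a compact neighbourhood of the segment, dominated convergence via $\prod_j\bigl(t+\varphi_j(e^s_\delta)\bigr)\ge\max\{t^n,e_n(x)\}$, which is available since $e_n$ is constant along the path), but it buys an explicit positive formula for the gap $f(y)-f(x)$ that is valid with no simplicity assumption on the roots — in effect the continuous extension of the partial-derivative formula that the paper only obtains later, in Lemma~\ref{borisov}. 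One step you should justify more carefully: continuity plus differentiability off a finite set does not by itself license the fundamental theorem of calculus on $[0,1]$. Here it is easily repaired — by Proposition~\ref{endlnst} there are only finitely many exceptional $s$, and the candidate derivative (the inner integral expression) is continuous, indeed bounded, in $s$ across them, so you can apply the fundamental theorem on the closed subintervals between exceptional points (or note that a continuous derivative-limit forces differentiability there) and sum; a sentence to this effect should be added.
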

\begin{proof}
If the components of $x=(x_1,\ldots,x_n)$ are pairwise different, then we can assume $x_1>x_2>\ldots>x_n$ and $y_1\geq y_2\geq\ldots\geq y_n$ after rearrangement. In this case, Lemma \ref{sslipaarwuntersch} shows that the inequality holds.

Otherwise, we need to slightly change the identical component pairs of $x$ in order to apply Lemma \ref{sslipaarwuntersch}. Because of the continuity of the elementary symmetric polynomials, if the changes to the components of $x$ are sufficiently small, then the resulting vector $x'$ still satisfies the inequality $e_k(x')<e_k(y)$ for all $k\in\{1,\ldots,n-1\}$. In order to preserve the equality $e_n(x')=e_n(y)$, we choose $x'_i\colonequals x_i\, (1+\eps)$ and $x'_j\colonequals x_j\,\frac 1{1+\eps}$ for each component pair $x_i=x_j$ and small $\eps>0$. So we find
\begin{align}
&(\log x'_i)^2 + (\log x'_j)^2\ =\ \bigl(\log x_i + \log(1+\eps)\bigr)^2 + \bigl(\log x_j - \log(1+\eps)\bigr)^2\notag\\
&\qquad=\ 2(\log x_i)^2 + 2\bigr(\log (1+\eps)\bigr)^2\ >\ (\log x_i)^2 + (\log x_j)^2\,.
\end{align}

If we dissolve all pairwise equalities in this way, then we can apply Lemma \ref{sslipaarwuntersch} to find
\begin{equation*}
\sum_{i=1}^n(\log x_i)^2\ <\ \sum_{i=1}^n(\log x'_i)^2\ \overset{\mathclap{\ref{sslipaarwuntersch}}}<\ \sum_{i=1}^n(\log y_i)^2\,.\qedhere
\end{equation*}
\end{proof}

Using the continuity of the elementary symmetric polynomials and the logarithmic function we are finally able to extend Lemma \ref{sslistrikt} and thus prove the SSLI without any restrictions:
\begin{theorem*}[Sum of squared logarithms inequality]
Let $n\in\N$ and $x_1,x_2,\ldots,x_n,y_1,y_2,\ldots,y_n\in\R_+$ such that\\[-1.5em]
\begin{align*}
e_k(x)\ &\leq\ e_k(y)\qquad \mathrlap{\textrm{for all\ \,$k\in\{1,\ldots,n-1\}$}}\\
\intertext{and}
~\\[-3.5em]
e_n(x)\ &\boldsymbol{=}\ e_n(y)\,.\\[1.05em]
\intertext{Then}
~\\[-4.55em]
\sum_{i=1}^n(\log x_i)^2\ &\leq\ \sum_{i=1}^n(\log y_i)^2\notag\,.
\end{align*}
\end{theorem*}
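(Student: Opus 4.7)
The plan is to deduce the general (non-strict) SSLI from the strict version in Lemma~\ref{sslistrikt} via a perturbation and continuity argument. Given $x,y\in\R_+^n$ satisfying the weak hypotheses, I would construct a family $y^{(\eps)}\in\R_+^n$ with $y^{(\eps)}\to y$ as $\eps\downarrow 0$ such that $(x,y^{(\eps)})$ satisfies the \emph{strict} hypotheses $e_k(x) < e_k(y^{(\eps)})$ for $k\in\{1,\ldots,n-1\}$ together with $e_n(x)=e_n(y^{(\eps)})$. Then applying Lemma~\ref{sslistrikt} would give $f(x) < f(y^{(\eps)})$, and letting $\eps\downarrow 0$ would yield $f(x)\leq f(y)$ by continuity of $f$ on $\R_+^n$.

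First I would dispose of the degenerate case in which all components of $y$ coincide, say $y_i=c$ for each $i$. Then $e_1(y)=nc$, and the AM--GM inequality gives $e_1(x) \geq n\,e_n(x)^{1/n} = n\,e_n(y)^{1/n} = nc = e_1(y)$, which together with the hypothesis $e_1(x)\leq e_1(y)$ forces equality in AM--GM. Hence $x_1=\ldots=x_n$, and from $e_n(x)=c^n$ we conclude $x=y$, making the statement trivial. In the remaining case there exist indices $i,j$ with $y_i>y_j$, and for small $\eps>0$ I define $y^{(\eps)}$ by replacing $y_i,y_j$ with $y_i(1+\eps)$ and $y_j/(1+\eps)$ respectively, leaving the other components unchanged. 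The product $y_i y_j$ is preserved, so $e_n(y^{(\eps)}) = e_n(y)=e_n(x)$ automatically.

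The one step that requires a short calculation — and is the main technical point of the argument — is the strict increase of $e_k$ for $k<n$ under this perturbation. Writing $\tilde y$ for the vector obtained from $y$ by removing entries $y_i,y_j$ and using the expansion
\begin{equation*}
e_k(y) \;=\; e_k(\tilde y) \,+\, (y_i+y_j)\,e_{k-1}(\tilde y) \,+\, y_i y_j\,e_{k-2}(\tilde y)\,,
\end{equation*}
one obtains for $k\in\{1,\ldots,n-1\}$
\begin{equation*}
e_k(y^{(\eps)}) - e_k(y) \;=\; \eps\,\Bigl(y_i - \tfrac{y_j}{1+\eps}\Bigr)\,e_{k-1}(\tilde y) \;>\;0\,,
\end{equation*}
where positivity uses $y_i>y_j$ and $e_{k-1}(\tilde y)>0$, which holds since $\tilde y$ has $n-2$ strictly positive entries and $0\leq k-1\leq n-2$. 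Consequently $e_k(x)\leq e_k(y) < e_k(y^{(\eps)})$ is strict for every $k<n$, while $e_n(x)=e_n(y^{(\eps)})$, so Lemma~\ref{sslistrikt} applies to $(x,y^{(\eps)})$ and yields $f(x)<f(y^{(\eps)})$. Passing to the limit $\eps\downarrow 0$ along $y^{(\eps)}\to y$ and invoking continuity of $f$ on $\R_+^n$ gives $f(x)\leq f(y)$, completing the proof. I do not foresee a genuine obstacle: the substantive work already resides in Lemmas~\ref{sslipaarwuntersch} and~\ref{sslistrikt}, and the remaining bookkeeping is mild.
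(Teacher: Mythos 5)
Your proposal is correct, and it reaches the theorem by the same overall strategy as the paper (reduce to the strict Lemma~\ref{sslistrikt} by a small perturbation plus continuity), but the perturbation itself is genuinely different. The paper perturbs $x$ on the \emph{coefficient} side: it lowers $e_k(x)$ by $\tfrac1m$ exactly at those indices $k<n$ where equality $e_k(x)=e_k(y)$ holds, sets $x^m=\varphi(e^m)$, applies the strict lemma, and lets $m\to\infty$, which requires the continuity of the root map $\varphi$. You instead perturb $y$ directly in the \emph{variables}: after disposing of the case of all $y_i$ equal via AM--GM (where the hypotheses force $x=y$), you pick $y_i>y_j$ and replace them by $y_i(1+\eps)$, $y_j/(1+\eps)$, and your identity $e_k(y^{(\eps)})-e_k(y)=\eps\bigl(y_i-\tfrac{y_j}{1+\eps}\bigr)e_{k-1}(\tilde y)>0$ is correct for all $k\in\{1,\ldots,n-1\}$ (with $e_0(\tilde y)=1$), while $e_n$ is preserved; Lemma~\ref{sslistrikt} then applies and continuity of $f$ on $\R_+^n$ finishes the argument. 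What your route buys: the perturbed vector $y^{(\eps)}$ manifestly stays in $\R_+^n$, so you only need continuity of $f$ and of the $e_k$, whereas the paper's perturbed point $x^m=\varphi(e^m)$ can acquire complex-conjugate root pairs (e.g.\ lowering $e_1$ while fixing $e_n$ splits a double real root into a conjugate pair), so strictly speaking the paper must appeal to its extension of $f$ to the set $M$ and to the continuity of $\varphi$ there; your argument sidesteps that delicacy at the modest cost of the case split and the explicit elementary-symmetric-polynomial computation. Both arguments place all the substantive analytic work in Lemmas~\ref{sslipaarwuntersch} and~\ref{sslistrikt}, as you note.
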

\begin{proof}
Choose $S\subseteq\{1,\ldots,n-1\}$ such that $e_k(x)=e_k(y)$ for all $k\in S$ and $e_k(x) < e_k(y)$ for all $k\in\{1,\ldots,n-1\}\setminus S$. Furthermore, for $k\in\{1,\dotsc,n\}$ and $m\in\N$, let
\begin{equation*}
e^m_k =\begin{cases}
e_k(x) - \frac 1m \quad~&\text{if }\; k\in S\,,\\
e_k(x) &\text{otherwise}
\end{cases}\qquad\quad\textrm{and}\qquad\quad x^m\colonequals\varphi(e^m)\,.
\end{equation*}
Then $0<e_k(x^m) < e_k(y)$ for all $k\in\{1,\ldots,n-1\}$ and all sufficiently large $m\in\N$ as well as $e_n(x^m)=e_n(y)$, thus Proposition \ref{sslistrikt} yields $\sum_{i=1}^n(\log x^m_i)^2\ <\ \sum_{i=1}^n(\log y_i)^2$ for all sufficiently large $m\in\N$. Since $\lim_{m\to\infty}x^m = x$, we find
\begin{equation*}
\sum_{i=1}^n(\log x_i)^2\ =\ \lim_{m\to\infty}\sum_{i=1}^n(\log x^m_i)^2\ \leq\ \sum_{i=1}^n(\log y_i)^2\,.\qedhere
\end{equation*}
\end{proof}

%
%
\section{Applications and Connections}
\label{sec:appl}

%
%
\subsection{Relation to entropy}
\citet{jozsa2015symmetric} study entropy and ``subentropy'' from the perspective of quantum information theory. They introduce and investigate partial derivatives of these quantities with respect to $e_k$, and establish their unrestricted nonnegativity. They view entropy as a function of the symmetric polynomials $e_k$ and use  analytic continuation to extend the definition to the entire set of nonnegative real $e_k$. Consequently, they obtain integral representations of entropy (and subentropy) from which the desired nonnegativity properties follow. More interestingly, they study higher partial derivatives w.r.t.\ the elementary symmetric polynomials and establish complete monotonicity of entropy and subentropy. Similar higher order monotonicity properties can be established for $f(x)=\sum_{i=1}^n (\log x_i)^2$.

In \cite{DannanNeffThiel2015} Dannan, Neff und Thiel discussed applications of the SSLI towards the entropy of probability distributions. We now prove a statement very similar to the entropy expression of two vectors:
we repeat our procedure from the last section with
\begin{equation}
g\colon (\C\setminus \R_{\leq 0})^n\to\C\,,\quad g(z_1,\ldots,z_n) = -\sum_{i=1}^nz_i\,\log (-z_i)
\end{equation}
instead of $f$, but otherwise identical notation and definitions. The composition $g\circ \varphi$ on $\R_+^n$ is once more a real-valued function: $x\in\R_-$ and $y\in\R_{\neq 0}$ imply $x\,\log (-x)\in\R$, while for complex conjugate pairs we find
\begin{equation*}
x_i\log x_i + \overline{x_i}\log \overline{x_i}\ =\ x_i\log x_i + \overline{x_i}\cdot\overline{\log x_i}\ =\ x_i\log x_i + \overline{x_i\log x_i}\ \in\R\,.
\end{equation*}
Analogously to the last section, the function $g \circ \varphi$ can be expressed as
\begin{equation*}
(g \circ \varphi)\colon \R_+^n\to \R\,,\quad (f\circ \varphi)(e) = f(\varphi_1(e),\ldots,\varphi_n(e)) =-\sum_{i=1}^n\varphi_i(e)\,\log \varphi_i(e)\,,
\end{equation*}
and for $x_1,\ldots,x_n\in\R_+$ we have $(g \circ \varphi)\bigl( e_1(x), e_2(x), \ldots, e_n(x)\bigr) = -\sum_{i=1}^nx_i\,\log x_i$.

We now determine the partial derivatives:
\begin{align}
\frac{\partial(g\circ \varphi)}{\partial e_k}(e)\ &=\quad \sum_{i=1}^n\frac{\partial g}{\partial z_i}\bigl(\varphi_1(e),\ldots,\varphi_n(e)\bigr)\,\frac{\partial \varphi_i}{\partial e_k}(e_1,\ldots,e_n)\notag\\
&\overset{\mathclap{\mathrm{Prop. \ref{diffableprop}}}}{=}\quad \sum_{i=1}^n\bigl(\log \varphi_i(e)+1\bigr)\,\frac{(-1)^{k+1}\,\varphi_i(e)^{n-k}}{\prod_{\underset{j\neq i}{j=1}}^n\bigl(\varphi_i(e)-\varphi_j(e)\bigr)}\notag\\
&=\quad \sum_{i=1}^n\frac{(-1)^{n+k-1}\,\varphi_i(e)^{n-k}}{(-1)^{n-1}\prod_{\underset{j\neq i}{j=1}}^n\bigl(\varphi_i(e)-\varphi_j(e)\bigr)}\bigl(\log \varphi_i(e)+1\bigr)\\
&=\quad -\,\sum_{i=1}^n\frac{\bigl(-\varphi_i(e)\bigr)^{n-k}}{\prod_{\underset{j\neq i}{j=1}}^n\bigl(\varphi_j(e)-\varphi_i(e)\bigr)}\log \varphi_i(e)\ -\,\sum_{i=1}^n\frac{\bigl(-\varphi_i(e)\bigr)^{n-k}}{\prod_{\underset{j\neq i}{j=1}}^n\bigl(\varphi_j(e)-\varphi_i(e)\bigr)}\,.\notag
\end{align}
By \eqref{pfd2}, the second sum is zero for $n-k\in\{0,\ldots,n-2\}$, i.e.\ for $k\in\{2,\ldots,n\}$. Thus we obtain
\begin{equation*}
\frac{\partial(g\circ \varphi)}{\partial e_k}(e)\ =\ -\,\sum_{i=1}^n\frac{\bigl(-\varphi_i(e)\bigr)^{n-k}}{\prod_{\underset{j\neq i}{j=1}}^n\bigl(\varphi_j(e)-\varphi_i(e)\bigr)}\log \varphi_i(e)\overset{\mathrm{Lemma \ref{lem2}}}{>}\ 0
\end{equation*}
for $n-k\in\{0,\ldots,n-2\}$ or, equivalently, for $k\in\{2,\ldots,n\}$.

\begin{remark}
It should also be noted that $\frac{\partial (f\circ \varphi_i)}{\partial e_k} = 2\,\frac{\partial(g\circ \varphi_i)}{\partial e_{k+1}}$.
\end{remark}

As with the SSLI, we can now infer the monotonicity of $g \circ \varphi$ along straight lines, which yields the following result.
\begin{corollary}\label{isotropieungleichung}
Let $n\in\N$ and $x_1,x_2,\ldots,x_n$, $y_1,y_2,\ldots,y_n > 0$ such that\\[-1.5em]
\begin{align*}
\hspace{11.0em}e_1(x)\ &\boldsymbol{=}\ e_1(y)\\[-1em]
\intertext{and}
~\\[-3.5em]
e_k(x)\ &\leq\ e_k(y)\qquad\textrm{for all\ \,$k\in\{2,\ldots,n\}$}\,.\\[.0em]
\intertext{Then}
~\\[-4.55em]
-\sum_{i=1}^n x_i\,\log x_i\ &\leq\ -\sum_{i=1}^n y_i\,\log y_i\notag\,.
\end{align*}
\end{corollary}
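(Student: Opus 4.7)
The plan is to mirror the proof of Theorem \ref{theossli} step by step. All of the analytic work is already in place: the calculation preceding the statement has established that $\frac{\partial(g\circ\varphi)}{\partial e_k}(e) > 0$ for every $k\in\{2,\ldots,n\}$ and every $e\in\R_+^n$ whose associated roots $\varphi(e)$ are pairwise distinct. No information about the $k=1$ partial derivative is needed, because the hypothesis $e_1(x)=e_1(y)$ will kill that term along the path.

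First I would establish the direct analogue of Lemma \ref{sslipaarwuntersch}: if $y$ has strictly decreasing components, $x$ has weakly decreasing components, $e_1(x)=e_1(y)$ and $e_k(x)<e_k(y)$ strictly for every $k\in\{2,\ldots,n\}$, then $g(x)<g(y)$. Following the SSLI proof verbatim, I parameterise the straight line $e^s \colonequals (1-s)\,e(x) + s\,e(y)$ in the convex set $\R_+^n$. Since $y$ has distinct components, $h_{e^1}$ has only simple roots, so by Proposition \ref{endlnst} there are only finitely many $s\in[0,1]$ at which $h_{e^s}$ has multiple roots. Away from this finite exceptional set the chain rule gives
\[
\frac{d}{ds}(g\circ\varphi)(e^s) \;=\; \sum_{k=2}^n \frac{\partial(g\circ\varphi)}{\partial e_k}(e^s)\,\bigl(e_k(y)-e_k(x)\bigr),
\]
where the $k=1$ term has dropped out because $e_1(x)=e_1(y)$; every factor in the sum is positive, so $(g\circ\varphi)(e^s)$ is strictly increasing on $[0,1]$ by continuity, giving $g(x)<g(y)$.

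Next I would remove the distinctness assumption (analogue of Lemma \ref{sslistrikt}) by a perturbation argument on $y$. For each cluster $y_{i_1}=\ldots=y_{i_r}=b$ of repeated values I replace these components by $b+\varepsilon_1,\ldots,b+\varepsilon_r$ with $\sum_l \varepsilon_l = 0$ and $\varepsilon_l$ chosen generically small, so that the resulting vector $y'\in\R_+^n$ has pairwise distinct components. The constraint $\sum_l\varepsilon_l=0$ preserves $e_1(y')=e_1(y)=e_1(x)$, and by continuity of the elementary symmetric polynomials the strict inequalities $e_k(x)<e_k(y')$ for $k\in\{2,\ldots,n\}$ are retained for sufficiently small $\|\varepsilon\|$. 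Strict concavity of $t\mapsto -t\log t$ yields $g(y')<g(y)$; applying the first step to $(x,y')$ gives $g(x)<g(y')$, hence $g(x)<g(y)$. Finally, the non-strict case is closed exactly as in the proof of Theorem \ref{theossli}: let $S\subseteq\{2,\ldots,n\}$ collect the indices where $e_k(x)=e_k(y)$, set $e^m_k := e_k(x)-1/m$ for $k\in S$ and $e^m_k := e_k(x)$ otherwise (with $e^m_1 := e_1(x)$), and define $x^m\colonequals \varphi(e^m)$. For large $m$, continuity of $\varphi$ and the fact that $x\in\R_+^n$ force $x^m\in\R_+^n$, the previous step gives $(g\circ\varphi)(e^m)<g(y)$, and letting $m\to\infty$ delivers the desired inequality.

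The only genuinely new issue compared with the SSLI is the \emph{directionality} of the perturbation in the second step. Concavity of $-t\log t$ forces any $e_1$-preserving splitting of a cluster of equal components to strictly decrease $g$; this is exactly what is needed when we perturb $y$, because we need $g(y')<g(y)$ in order to chain it with $g(x)<g(y')$ into $g(x)<g(y)$. The same concavity, however, would be fatal if we tried to perturb $x$ instead, since then $g(x')<g(x)$ combined with $g(x')<g(y)$ yields no conclusion about $g(x)$ versus $g(y)$. Thus the asymmetry of the hypothesis (equality on $e_1$ versus inequality on the higher $e_k$) propagates into an asymmetric treatment of $x$ and $y$, and identifying this asymmetry is the subtle point that has to be handled carefully.
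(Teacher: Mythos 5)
Your proposal follows essentially the route the paper intends: the paper's own justification of Corollary \ref{isotropieungleichung} is the single remark that, as with the SSLI, monotonicity of $g\circ\varphi$ along the straight segment joining $e(x)$ to $e(y)$ (on which $e_1$ is constant, so only the derivatives $\frac{\partial(g\circ\varphi)}{\partial e_k}$ with $k\geq 2$, already shown to be positive, enter) yields the result, and your three steps are precisely the transcription of Lemma \ref{sslipaarwuntersch}, Lemma \ref{sslistrikt} and the limiting argument used for Theorem \ref{theossli}. Your observation that the multiple-root case must be handled by perturbing $y$ rather than $x$ --- placing the simple-root hypothesis of Proposition \ref{endlnst} on the endpoint $e(y)$, splitting equal components of $y$ additively so as to preserve $e_1$, and using strict concavity of $t\mapsto -t\log t$ to obtain $g(y')<g(y)$, which chains correctly with $g(x)<g(y')$ --- is correct and is exactly the detail the paper glosses over; the multiplicative, $e_n$-preserving perturbation of $x$ used for the SSLI has no workable analogue here, as you note.

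One sub-claim in your final step is false as stated: continuity of $\varphi$ does not force $x^m=\varphi(e^m)\in\R_+^n$. For instance, for $x=(1,1,1)$ lowering $e_2$ from $3$ to $3-\delta$ while keeping $e_1=3$ and $e_3=1$ turns the triple root $1$ into one real root and a complex-conjugate pair (the roots are $1+u$ with $u^3=\delta(1+u)$), so the lemma for real vectors cannot be invoked at $x^m$. The same wrinkle is present in the paper's own final step for the SSLI, and the repair is immediate: do not pass through a real vector $x^m$ at all, but run the segment argument directly from $e^m$ to $e(y)$ (after perturbing $y$ if needed). Proposition \ref{endlnst} and the positivity of the partial derivatives only require $e^m,e(y)\in\R_+^n$ and simple roots at one endpoint, and $g\circ\varphi$ is real-valued and continuous on all of $\R_+^n$, so one still gets $(g\circ\varphi)(e^m)<g(y)$, and letting $m\to\infty$ gives $g(x)\leq g(y)$. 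With that rewording your argument is complete.
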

Weakening the equality in the first condition to $e_k(x)\leq e_k(y)$ for all $k\in\{1,\ldots,n\}$ again allows for obvious counterexamples similar to the SSLI: For $x=(e^{-1},\ldots e^{-1})\in\R_+^n$ and $y=(1,\ldots,1)\in\R_+^n$ the weakened condition is true, yet $-\sum_{i=1}^nx_i\,\log x_i = \frac ne > 0 = -\sum_{i=1}^ny_i\,\log y_i$.

See also the discussion in Section~\ref{sec:appl}.

%
%
\subsection{The SSLI in terms of matrix invariants}
\label{sec:ssli-matrix}
Let $U\in\PSym(n)$, where $\PSym(n)\subset \R^{n\times n}$ denotes the set of positive definite symmetric $n\times n$-matrices. Then $U$ is orthogonally diagonalizable with real eigenvalues $\lambda_1,\dotsc,\lambda_n>0$. The $k$-th invariant $I_k(U)$ of $U$ is defined as the $k$-th elementary symmetric polynomial of the vector $\lambda(U)=(\lambda_1,\dotsc,\lambda_n)$, i.e.\ $I_k(U) \colonequals e_k\bigl(\lambda(U)\bigr)$; (thus, $I_1(U)=\tr X$ and $I_n(U)=\det U$).

The SSLI can be equivalently expressed in terms of these invariants of positive definite symmetric matrices.
\begin{theorem}
\label{theorem:SSLImatrixFormulation}
Let $U,\widetilde U\in\PSym(n)$. If $I_k(U) \leq I_k(\widetilde U)$ for all $k\in\{1,\ldots,n-1\}$ and $\det U \boldsymbol{=} \det \widetilde U$, then $\|\log U\|^2 \leq \|\log \widetilde U\|^2$, where $\log$ is the principal matrix logarithm on $\PSym(n)$ and $\norm{\,.\,}$ denotes the Frobenius matrix norm.

\end{theorem}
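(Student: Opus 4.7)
The plan is to reduce Theorem \ref{theorem:SSLImatrixFormulation} to the already-proven vector version (Theorem \ref{theossli}) by passing to eigenvalues. Since $U,\widetilde U\in\PSym(n)$, each is orthogonally diagonalizable with strictly positive eigenvalue vector $\lambda(U)=(\lambda_1,\ldots,\lambda_n)\in\R_+^n$ and analogously $\lambda(\widetilde U)\in\R_+^n$, so the principal matrix logarithm on $\PSym(n)$ acts by applying the ordinary real logarithm to each eigenvalue in the diagonal representation.

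First I would translate the hypotheses. By the definition $I_k(U)=e_k(\lambda(U))$ and the identity $\det U = e_n(\lambda(U))$, the assumptions $I_k(U)\leq I_k(\widetilde U)$ for $k\in\{1,\ldots,n-1\}$ and $\det U = \det\widetilde U$ become exactly
\begin{equation*}
e_k(\lambda(U))\ \leq\ e_k(\lambda(\widetilde U))\quad\text{for } k\in\{1,\ldots,n-1\}\,,\qquad e_n(\lambda(U))\ =\ e_n(\lambda(\widetilde U))\,,
\end{equation*}
which is precisely the hypothesis of Theorem \ref{theossli} applied to $x\colonequals\lambda(U)$ and $y\colonequals\lambda(\widetilde U)$.

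Second I would translate the conclusion. Writing $U=Q\,\diag(\lambda_1,\ldots,\lambda_n)\,Q^T$ with $Q\in\Or(n)$, the principal logarithm gives $\log U = Q\,\diag(\log\lambda_1,\ldots,\log\lambda_n)\,Q^T$, and the unitary invariance of the Frobenius norm yields
\begin{equation*}
\|\log U\|^2\ =\ \|\diag(\log\lambda_1,\ldots,\log\lambda_n)\|^2\ =\ \sum_{i=1}^n(\log\lambda_i(U))^2\,,
\end{equation*}
and likewise $\|\log\widetilde U\|^2 = \sum_{i=1}^n(\log\lambda_i(\widetilde U))^2$. Applying Theorem \ref{theossli} to $x$ and $y$ then yields the claim.

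There is essentially no obstacle; the proof is a short dictionary translation between matrix invariants and elementary symmetric polynomials of the spectrum, combined with the identity $\|\log U\|^2=\sum_i(\log\lambda_i)^2$ coming from unitary invariance. The only point worth stating carefully is that positive definiteness guarantees $\lambda(U),\lambda(\widetilde U)\in\R_+^n$, so that the principal logarithm is real-valued and Theorem \ref{theossli} is directly applicable.
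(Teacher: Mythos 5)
Your proof is correct and follows essentially the same route as the paper: both reduce the matrix statement to Theorem \ref{theossli} by noting $I_k(U)=e_k(\lambda(U))$, $\det U=e_n(\lambda(U))$, and $\|\log U\|^2=\sum_{i=1}^n(\log\lambda_i(U))^2$ via orthogonal diagonalization and unitary invariance of the Frobenius norm. Your write-up merely spells out the dictionary translation a bit more explicitly than the paper's one-line argument.
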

\begin{proof}
Since $\norm{\log U}^2=\sum_{i=1}^n (\log \lambda_i(U))^2$, using the SSLI we immediately have
\begin{equation*}
\|\log U\|^2\ = \sum_{i=1}^n\bigl(\log \lambda_i(U)\bigr)^2\ \leq\ \sum_{i=1}^n\bigl(\log \lambda_i(\widetilde U)\bigr)^2 = \|\log \widetilde U\|^2\,.\qedhere
\end{equation*}
\end{proof}

Theorem \ref{theorem:SSLImatrixFormulation} can be applied directly to the \emph{quadratic Hencky energy}
\begin{equation*}
\WH(F)\ =\ \mu \,\|\dev_n \log U\|^2 + \frac{\kappa}{2}\,[\tr (\log U)]^2\ =\ \mu \,\|\log U\|^2+ \frac{\lambda}{2}\,[\log(\det U)]^2\,,
\end{equation*}
which was introduced into the theory of nonlinear elasticity in 1929 by H.\ Hencky \cite{hencky1929}. Here, $F\in\GL^+(n)$ is the deformation gradient, $\GL^+(n)$ is the set of invertible $n\times n$-matrices with positive determinant, $U=\sqrt{F^TF}$ is the right stretch tensor and $\dev_n\log {U} =\log {U}-\frac{1}{n}\,\tr(\log {U})\cdot\id$ is the deviatoric part of the Hencky strain tensor $\log U$. The material parameters $\mu,\lambda$ with $\mu>0$ and $3\,\lambda+2\,\mu\geq0$ are called the Lam\'e constants, while $\kappa\geq0$ is known as the bulk modulus. The Hencky energy has recently been characterized by a unique geometric property \cite{neff_eidel_martin_2015,Neff_Eidel_Osterbrink_2013,NeffGhibaLankeit}: it measures the squared geodesic distance of $F$ to the special orthogonal group $\SO(n)$ with respect to a left-$\GLn$-invariant, right-$\SOn$-invariant Riemannian metric on $\GLn$.

In terms of the quadratic Hencky energy, Theorem \ref{theorem:SSLImatrixFormulation} can be stated as follows:
\begin{corollary}\label{hencky}
Let $F,\widetilde F\in\GL^+(n)$ with $U=\sqrt{F^TF}$ and $\widetilde U = \sqrt{\widetilde F^T\widetilde F}$. If $\det U = \det \widetilde U$ and $I_k(U)\leq I_k(\widetilde U)$ for all $k\in\{1,\dotsc,n-1\}$, then $\WH(F)\leq \WH(\widetilde F)$.
\end{corollary}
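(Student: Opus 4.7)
The plan is to reduce the claim immediately to the matrix formulation of the SSLI, Theorem \ref{theorem:SSLImatrixFormulation}, by exploiting the decomposition of the Hencky energy and the equality of determinants. First I would work with the second form of the energy given in the paper,
$$\WH(F) = \mu\,\|\log U\|^2 + \frac{\lambda}{2}\bigl[\log(\det U)\bigr]^2,$$
which is convenient because it cleanly separates the Frobenius-norm contribution (on which the SSLI acts) from a dilatational term that depends only on $\det U$.

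Next I would observe that the hypothesis $\det U = \det \widetilde U$ yields $[\log(\det U)]^2 = [\log(\det \widetilde U)]^2$, so the dilatational contributions agree for $F$ and $\widetilde F$ regardless of the sign of $\lambda$; this matters because the paper only assumes $3\lambda+2\mu\geq 0$, so $\lambda$ itself may be negative. For the deviatoric part, since $U, \widetilde U \in \PSym(n)$ satisfy exactly the invariant hypotheses of Theorem \ref{theorem:SSLImatrixFormulation} (namely $I_k(U)\leq I_k(\widetilde U)$ for $k\in\{1,\ldots,n-1\}$ and $\det U = \det \widetilde U = I_n$), the theorem gives $\|\log U\|^2 \leq \|\log \widetilde U\|^2$. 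Multiplying by $\mu>0$ preserves this inequality, and adding the identical dilatational terms gives $\WH(F)\leq \WH(\widetilde F)$.

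There is essentially no obstacle here: all of the analytic content sits in Theorem \ref{theorem:SSLImatrixFormulation}, which in turn rests on the SSLI proved in the previous section. The corollary reduces to bookkeeping, the only noteworthy point being that the sign constraint on the Lamé parameters enters only through $\mu>0$; the potentially negative $\lambda$ plays no role because the determinant constraint neutralizes the $\lambda$-dependent term entirely.
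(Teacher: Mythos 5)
Your proposal is correct and matches the paper's intended argument: the paper states Corollary \ref{hencky} as a direct application of Theorem \ref{theorem:SSLImatrixFormulation} to the form $\WH(F)=\mu\,\|\log U\|^2+\frac{\lambda}{2}[\log(\det U)]^2$, exactly as you do, with the determinant equality neutralizing the $\lambda$-term and $\mu>0$ preserving the SSLI inequality. Your remark about the possibly negative sign of $\lambda$ is a welcome extra precision but does not change the substance.
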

\begin{remark}
Corollary \ref{hencky} means that $W_H$ satisfies a version of Truesdell's empirical inequalities \cite[pages 158, 171]{truesdell2004non}.
\end{remark}

In a similar way, Corollary \ref{isotropieungleichung} can be stated in terms of matrix invariants.
\begin{theorem}\label{matrixinvariants}
Let $U,\widetilde U\in\PSym(n)$. If $\tr U \boldsymbol{=} \tr \widetilde U$ and $I_k(U) \leq I_k(\widetilde U)$ for all $k\in\{2,\ldots,n\}$, then $\innerproduct{U,\log U - \id} \geq \innerproduct{\smash{\widetilde U},\log \smash{\widetilde U} - \id}$, where $\innerproduct{X,Y}=\tr(X^TY)$ denotes the canonical inner product of two $n\times n$-matrices $X$ and $Y$.
\end{theorem}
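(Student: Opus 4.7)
The plan is to reduce this to Corollary \ref{isotropieungleichung}, applied to the eigenvalue vectors of $U$ and $\widetilde U$. Since both matrices are positive definite symmetric, spectral decomposition gives positive eigenvalues $\lambda_i \colonequals \lambda_i(U) > 0$ and $\tilde\lambda_i \colonequals \lambda_i(\widetilde U) > 0$. By the definition of the matrix invariants, $I_k(U) = e_k(\lambda_1,\ldots,\lambda_n)$ and similarly for $\widetilde U$; in particular $\tr U = e_1(\lambda(U))$ and $\tr \widetilde U = e_1(\lambda(\widetilde U))$. So the assumption $\tr U = \tr \widetilde U$ translates directly into $e_1(\lambda(U)) = e_1(\lambda(\widetilde U))$, and $I_k(U) \leq I_k(\widetilde U)$ for $k\in\{2,\ldots,n\}$ becomes $e_k(\lambda(U)) \leq e_k(\lambda(\widetilde U))$. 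This is exactly the hypothesis of Corollary \ref{isotropieungleichung}.

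Next, I would rewrite the matrix inner product spectrally. Using $\innerproduct{X,Y} = \tr(X^T Y)$, the symmetry of $U$ and the fact that $\log U$ is defined on $\PSym(n)$ by applying $\log$ to the eigenvalues in the diagonalizing basis, one obtains
\begin{equation*}
\innerproduct{U,\log U - \id}\ =\ \tr(U\,\log U) - \tr U\ =\ \sum_{i=1}^n \lambda_i \log \lambda_i\ -\ \sum_{i=1}^n \lambda_i\,,
\end{equation*}
and analogously for $\widetilde U$ with $\tilde\lambda_i$.

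Corollary \ref{isotropieungleichung} applied to $x \colonequals \lambda(U)$ and $y \colonequals \lambda(\widetilde U)$ yields $-\sum_i \lambda_i \log \lambda_i \leq -\sum_i \tilde\lambda_i \log \tilde\lambda_i$, i.e.\ $\sum_i \lambda_i \log \lambda_i \geq \sum_i \tilde\lambda_i \log \tilde\lambda_i$. Subtracting the two displayed expressions and using $\sum_i \lambda_i = \tr U = \tr \widetilde U = \sum_i \tilde\lambda_i$ to cancel the linear terms gives
\begin{equation*}
\innerproduct{U,\log U - \id} - \innerproduct{\widetilde U,\log \widetilde U - \id}\ =\ \sum_{i=1}^n \lambda_i \log \lambda_i - \sum_{i=1}^n \tilde\lambda_i \log \tilde\lambda_i\ \geq\ 0\,,
\end{equation*}
which is the claimed inequality.

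There is no real obstacle here beyond bookkeeping: the theorem is essentially a spectral reformulation of Corollary \ref{isotropieungleichung}, and the only point requiring a moment's care is that the $-\id$ and $-1$ contributions cancel precisely because of the trace-equality hypothesis, in complete parallel with the way the SSLI matrix formulation (Theorem \ref{theorem:SSLImatrixFormulation}) follows from the scalar SSLI.
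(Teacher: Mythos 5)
Your proposal is correct and follows essentially the same route as the paper: apply Corollary \ref{isotropieungleichung} to the eigenvalue vectors $\lambda(U)$ and $\lambda(\widetilde U)$, rewrite $\innerproduct{U,\log U-\id}$ as $\sum_i\lambda_i\log\lambda_i-\tr U$, and cancel the trace terms using $\tr U=\tr\widetilde U$. The paper's proof does exactly this, so no further comment is needed.
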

\begin{proof}
Corollary \ref{isotropieungleichung} implies $\sum_{i=1}^n\lambda_i(U)\,\log \lambda_i(U)\ \geq\ \sum_{i=1}^n\log \lambda_i(\widetilde U)\,\log \lambda_i(\widetilde U)$. Using the condition $\tr U = \tr \widetilde U$, we compute
\begin{align*}
\innerproduct{U,\log U - \id}\ = \innerproduct{U,\log U} - \innerproduct{U,\id} &= \sum_{i=1}^n\lambda_i(U) \cdot \log \lambda_i(U) - \tr U\\
&\geq \sum_{i=1}^n\lambda_i(\widetilde U) \cdot \log \lambda_i(\widetilde U) - \tr \widetilde U = \innerproduct{\smash{\widetilde U},\log \smash{\widetilde U} - \id}\,.\qedhere
\end{align*}
\end{proof}

\begin{remark}
The constitutive law induced by the hyperelastic energy potential
\begin{equation*}
W_{\mathrm{B}}(F)\ =\ \innerproduct{U,\log U - \id}\ =\ \innerproduct{\sqrt{F^TF},\log \sqrt{F^TF} - \id}
\end{equation*}
is a special case of \emph{Becker's law} of elasticity, which was introduced by the geologist G.F.\ Becker in 1893 \cite{becker1893} in a way remarkably similar to H.\ Hencky's deduction of the quadratic Hencky energy \cite{neff2014becker}. Becker's elastic law is hyperelastic (i.e.\ admits an energy potential) only in the lateral contraction free case, which is described by the energy function $W_{\mathrm{B}}$.
\end{remark}

Since $\tr (X\log X) = \sum_{i=1}^n \lambda_i(X)\log \lambda_i(X)$, we can translate the last Theorem \ref{matrixinvariants} into a statement for the quantum von Neumann entropy.
\begin{corollary}
Let $X, Y\in\PSym(n)$ be density matrices, so that $\tr X \boldsymbol{=} \tr Y = 1$. If $I_k(X) \leq I_k(Y)$ for all $k\in\{2,\ldots,n\}$, then the von Neumann entropy $-\tr (X\log X) \le -\tr(Y \log Y)$.
\end{corollary}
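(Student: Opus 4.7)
The plan is to derive this corollary almost immediately from Theorem \ref{matrixinvariants}. The hypotheses match up perfectly once we observe that the density matrix condition $\tr X = \tr Y = 1$ supplies the equality of traces required by Theorem \ref{matrixinvariants}, and the assumption $I_k(X) \leq I_k(Y)$ for $k \in \{2,\dotsc,n\}$ is exactly the remaining invariant hypothesis. So I would begin by setting $U = X$ and $\widetilde U = Y$ and invoking Theorem \ref{matrixinvariants} to obtain
\begin{equation*}
\innerproduct{X, \log X - \id}\ \geq\ \innerproduct{\smash{Y}, \log \smash{Y} - \id}\,.
\end{equation*}

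Next I would unpack the inner product using bilinearity: since $\innerproduct{A, \id} = \tr A$ and $\innerproduct{A, \log A} = \tr(A \log A)$, we have $\innerproduct{X, \log X - \id} = \tr(X \log X) - \tr X$ and likewise for $Y$. Substituting the normalization $\tr X = \tr Y = 1$, the $-\tr$ terms are both equal to $-1$ and cancel from either side of the inequality, leaving
\begin{equation*}
\tr(X \log X)\ \geq\ \tr(Y \log Y)\,.
\end{equation*}
Multiplying by $-1$ reverses the inequality and yields $-\tr(X \log X) \leq -\tr(Y \log Y)$, which is the claim.

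There is essentially no obstacle here; the only small point to verify is that density matrices lie in $\PSym(n)$ (which is built into the hypothesis $X, Y \in \PSym(n)$) so that the principal matrix logarithm is well defined and Theorem \ref{matrixinvariants} applies verbatim. The real work has already been done in proving Corollary \ref{isotropieungleichung} and Theorem \ref{matrixinvariants}; the corollary is simply a rebranding of that inequality in the language of quantum information, where the condition $\tr X = 1$ is standard and the quantity $-\tr(X \log X)$ is the von Neumann entropy.
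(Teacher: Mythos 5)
Your proposal is correct and follows exactly the route the paper intends: the corollary is obtained by applying Theorem \ref{matrixinvariants} with $U=X$, $\widetilde U=Y$, using $\innerproduct{X,\log X-\id}=\tr(X\log X)-\tr X$ and the normalization $\tr X=\tr Y=1$. Nothing further is needed.
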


%
%
\subsection{Application to geodesic distance on $\PSym(n)$}
\label{sec:geod}
The convex cone $\PSym(n)$ is frequently also viewed as a \emph{Riemannian manifold} endowed with the Riemannian metric \cite{bhatia2006riemannian,fiala2011geometrical,moakher2005differential,moakher2006closest,rougee2006intrinsic}
\begin{equation}
g_C(X,Y) = \tr(C^{-1} X C^{-1} Y), 
\end{equation}
where $C\in\PSym(n)$ and $X,Y\in\Sym(n)=T_C\PSym(n)$, the tangent space at $C$. This manifold is geodesically complete, and the unique geodesic joining $C_1,C_2 \in \PSym(n)$ has the closed form~\citep[6.1.6]{bhatia2009positive}
\begin{equation}
\label{eq:PSymGeodesicCurves}
\gamma(t) \;=\; C_1^{\nicefrac 12}(C_1^{-\nicefrac 12} C_2 C_1^{-\nicefrac 12})^{t}\,C_1^{\nicefrac 12},\qquad t \in [0,1].
\end{equation}
The \emph{geodesic distance} between $C_1,C_2\in\PSym(n)$ is given by~\citep[6.1.6]{bhatia2009positive}
\begin{equation*}
\dgPsym (C_1,C_2) = \norm{\log(C_2^{-\nicefrac12} \,C_1\, C_2^{-\nicefrac12})}\,.
\end{equation*}
In particular, for $C_2=\id$, we obtain the simple formula
\begin{equation*}
\dgPsym^2(C_1,\id) = \norm{\log C_1}^2\,.
\end{equation*}
The sum-of-squared-logarithm inequalities can therefore be stated in terms of the geodesic distance of positive definite symmetric matrices to the identity matrix $\id$.
\begin{corollary}
\label{cor:geodesicDistance}
Let $I_1(C),\dotsc,I_n(C),\,I_1(\widetilde C),\dotsc,I_n(\widetilde C)$ denote the principal invariants of $C,\widetilde C\in\PSym(n)$. If $I_k(C) \leq I_k(\widetilde C)$ for all $k\in\{1,\dotsc,n-1\}$ and $I_n(C) = \det C = \det \widetilde C = I_n(\widetilde C)$, then
\begin{equation*}
\dgPsym(C,\id) \leq \dgPsym(\widetilde C,\id)\,.
\end{equation*}
\end{corollary}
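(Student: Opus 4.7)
The plan is to observe that the corollary follows almost directly from the matrix-invariant formulation of the SSLI (Theorem \ref{theorem:SSLImatrixFormulation}) combined with the explicit formula for the geodesic distance to the identity given just above the corollary statement. Nothing new needs to be computed; the content of the corollary is essentially a translation of Theorem \ref{theorem:SSLImatrixFormulation} into the Riemannian language.

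\textbf{Step 1: Reduce to squared distances.} Since $\dgPsym(C,\id)\geq 0$ and $\dgPsym(\widetilde C,\id)\geq 0$, it is equivalent to prove the inequality $\dgPsym^2(C,\id)\leq \dgPsym^2(\widetilde C,\id)$. Using the closed-form identity $\dgPsym^2(C,\id)=\norm{\log C}^2$ recalled just before the corollary (which specializes the general geodesic distance formula to $C_2=\id$), the target inequality becomes
\begin{equation*}
\norm{\log C}^2\ \leq\ \norm{\log \widetilde C}^2.
\end{equation*}

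\textbf{Step 2: Apply the matrix form of the SSLI.} The hypotheses of the corollary are exactly the hypotheses of Theorem \ref{theorem:SSLImatrixFormulation}: $C,\widetilde C\in\PSym(n)$, $I_k(C)\leq I_k(\widetilde C)$ for $k\in\{1,\ldots,n-1\}$, and $\det C=\det \widetilde C$ (which is the condition $I_n(C)=I_n(\widetilde C)$). Theorem \ref{theorem:SSLImatrixFormulation} then yields $\norm{\log C}^2\leq \norm{\log \widetilde C}^2$, and taking square roots finishes the argument.

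\textbf{Main obstacle.} There is no real obstacle here, because all the substantive work has already been done: the analytic heart (the SSLI, Theorem \ref{theossli}) reduces the comparison of sums of squared logarithms of the eigenvalues of $C$ and $\widetilde C$ to the given conditions on the elementary symmetric polynomials, and Theorem \ref{theorem:SSLImatrixFormulation} already repackages this in matrix form. The only point that warrants explicit mention is the identification of $\norm{\log C}^2$ with $\dgPsym^2(C,\id)$, for which one appeals to the cited closed form \eqref{eq:PSymGeodesicCurves} for geodesics on $\PSym(n)$.
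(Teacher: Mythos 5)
Your argument is correct and is exactly the route the paper intends: the corollary is an immediate consequence of Theorem \ref{theorem:SSLImatrixFormulation} together with the identity $\dgPsym^2(C,\id)=\norm{\log C}^2$ stated just before it. Nothing further is needed.
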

Since $\id$ commutes with every matrix, $\dgPsym$ actually reduces to the \emph{log-Euclidean distance}
\begin{equation*}
\distlogeuc(C,\widetilde C) \colonequals \norm{\log C - \log \widetilde C}\,,
\end{equation*}
i.e., the Euclidean distance between the principal logarithms of two matrices \citep{arsigny2007geometric}. Thus, the SSLI may be equivalently stated for $\distlogeuc$.

%
%
\subsection{Additional applications}
The SSLI may also find applications in other fields. In the following we list some of these potential applications.
\begin{itemize}
\setlength{\itemsep}{1pt}

\item Recall that $I_k(X)=\tr \wedge^k X$, where $\wedge$ denotes the antisymmetric tensor product (Grassmann product). Thus, the SSLI for matrices can be equivalently stated in terms of this tensor product. This notation immediately suggests an interesting generalization, namely to the dual case of \emph{symmetric tensor product} denoted $\tr \vee^k X=h_k(\lambda(X))$, where $h_k$ denotes the $k$-th complete symmetric polynomial. Thus, we can consider inequalities of the form
\begin{align*}
&\tr \vee^k X \leq \tr \vee^k Y\ \ \textrm{for all $k\in\{1,\ldots,n-1\}$}\quad \textrm{and}\quad \tr \vee^n X = \tr \vee^n Y\\[.5em]
&\qquad\qquad\qquad\qquad \implies\qquad F(X) \leq F(Y)\,.
\end{align*}
More generally, extensions of the SSLI to other matrix monotone functions may be obtained by building on~\citep{silhavy_2015}.
\item It might also be of interest to characterize the matrix functions $\mathcal F\colon \R^{n\times n}\to \R^{n\times n}$ for which
\begin{equation*}
\inf_{Q\in\SO(n)} \| \sym \mathcal F(Q^TZ) \| = \inf_{Q\in\SO(n)} \| \mathcal F(Q^TZ) \| = \| \mathcal F(R^TZ) \| = \|\mathcal F(H)\|\,,
\end{equation*}
where $Z = R\,H$ is the polar decomposition of Z, see \cite{lankeit2014minimization,Neff_Nagatsukasa_logpolar13} and compare to \eqref{question}.
\item The final connection that we mention is perhaps the most interesting. Nonnegative elementary symmetric polynomials of matrices have been studied under the guise of $Q$-matrices ($Q_0$-matrices)~\citep{hershkowitz1983spectra}. These are complex matrices, whose elementary symmetric polynomials are positive (nonnegative). These matrices are much more tractable than the better known class of $P$-matrices (i.e.\ matrices with positive principal minors), which have been extensively studied in matrix analysis and optimization~\citep{fiedler1962matrices,berman1979nonnegative,coxson1994p}. The following theorem of Kellog applies to $P$ matrices, and also to $Q$ matrices~\citep{hershkowitz1983spectra}:
\begin{theorem}[\citep{kellogg1972complex}]
Let $X \in \C^{n\times n}$ be a matrix for which $I_k(X) \ge 0$ for $k\in\{1,\ldots,n\}$. Then, the spectrum of $A$ is contained in the set
\begin{equation*}
D \colonequals \left\lbrace z : |\arg z| \le \pi - \frac\pi n\right\rbrace.
\end{equation*}
Moreover, if any eigenvalue of $X$ lies on the boundary of $D$, then necessarily all symmetric functions $I_k(X)$, except $I_n(X)$, are equal to zero.
\end{theorem}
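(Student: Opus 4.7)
The plan is to translate the statement into a question about roots of a polynomial with nonnegative coefficients and then read off the geometric conclusion from a single computation of an imaginary part. Writing the characteristic polynomial of $X$ as
\begin{equation*}
p(t) \;=\; \prod_{i=1}^n(t-\lambda_i) \;=\; \sum_{k=0}^n(-1)^k\,I_k(X)\,t^{n-k}
\end{equation*}
(with the convention $I_0(X)\colonequals 1$), the substitution $t\mapsto -t$ yields
\begin{equation*}
q(t)\;\colonequals\; t^n + I_1(X)\,t^{n-1} + \cdots + I_{n-1}(X)\,t + I_n(X)\,,
\end{equation*}
whose roots are precisely $-\lambda_1,\ldots,-\lambda_n$. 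By hypothesis $q$ has nonnegative real coefficients and leading coefficient $1$, and since $|\arg\lambda|+|\arg(-\lambda)|=\pi$ for every nonzero $\lambda\in\C$, the condition $|\arg\lambda|\le \pi-\pi/n$ for every eigenvalue is equivalent to $|\arg\mu|\ge \pi/n$ for every nonzero root $\mu$ of $q$.

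Next I would verify the sector bound directly. Writing $\mu = re^{i\theta}$ with $r>0$, and (after passing to $\overline{\mu}$, which is also a root since the coefficients of $q$ are real) assuming $\theta\ge 0$, one computes
\begin{equation*}
\imag q(\mu) \;=\; \sum_{k=0}^{n-1} I_k(X)\,r^{n-k}\,\sin\bigl((n-k)\theta\bigr)\,,
\end{equation*}
since the $k=n$ term contributes $\sin 0=0$. If $0<\theta<\pi/n$, then every angle $(n-k)\theta$ for $k\in\{0,\ldots,n-1\}$ lies in the open interval $(0,\pi)$, so every sine is strictly positive; in particular the $k=0$ summand $r^n\sin(n\theta)$ alone forces $\imag q(\mu)>0$, hence $\mu$ cannot be a root. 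The case $\theta=0$ is immediate because $q(r)\ge r^n>0$ for $r>0$. This rules out $|\arg\mu|<\pi/n$ and proves the first assertion.

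For the ``moreover'' statement, suppose $|\arg\mu|=\pi/n$ exactly, say $\theta=\pi/n$. The $k=0$ term in the same sum now vanishes since $\sin(n\theta)=\sin\pi=0$, whereas for every $k\in\{1,\ldots,n-1\}$ the angle $(n-k)\pi/n$ still lies in $(0,\pi)$, so $\sin\bigl((n-k)\pi/n\bigr)>0$. Imposing $\imag q(\mu)=0$ then forces $I_k(X)=0$ for all $k\in\{1,\ldots,n-1\}$, which is exactly the claim.

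The argument is essentially elementary once one passes to $q$, so there is no serious technical obstacle. The one point worth emphasizing is that the strict positivity of the leading coefficient of $q$, namely $I_0(X)=1$, is what produces a strict sign for $\imag q(\mu)$ throughout the open sector, and the very same rigidity at the endpoint $\theta=\pi/n$ is what pins down $I_1(X)=\cdots=I_{n-1}(X)=0$ in the limiting case.
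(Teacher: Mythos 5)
Your proof is correct, and there is nothing in the paper to compare it against: the theorem is only quoted (from Kellogg \citep{kellogg1972complex}, via the discussion of $Q$- and $Q_0$-matrices in \citep{hershkowitz1983spectra}) in the applications section, with no proof given. Your route — passing from the characteristic polynomial to $q(t)=t^n+I_1(X)\,t^{n-1}+\cdots+I_{n-1}(X)\,t+I_n(X)$, whose roots are $-\lambda_1,\dotsc,-\lambda_n$, and then reading everything off the single identity $\imag q(re^{i\theta})=\sum_{k=0}^{n-1} I_k(X)\,r^{n-k}\sin\bigl((n-k)\theta\bigr)$ — is the classical elementary argument for polynomials with nonnegative coefficients, and it delivers both halves of the statement at once: for $0<\theta<\pi/n$ all sines are strictly positive and $I_0(X)=1$ forces $\imag q(\mu)>0$, excluding the open sector, while at $\theta=\pi/n$ the $k=0$ term drops out and the remaining strictly positive factors pin down $I_1(X)=\cdots=I_{n-1}(X)=0$; the reduction to $\theta\ge 0$ via conjugate roots and the case $\theta=0$ are handled correctly. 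Two points are worth stating explicitly rather than in passing: the equivalence $|\arg\lambda|\le\pi-\pi/n \iff |\arg(-\lambda)|\ge\pi/n$ is only meaningful for $\lambda\neq 0$, and since only $I_n(X)\ge 0$ is assumed, $X$ may have a zero eigenvalue, which sits at the apex of $D$ where $\arg$ is undefined; so the sector claim and the ``moreover'' clause should be read as statements about nonzero eigenvalues and about the boundary rays $|\arg z|=\pi-\pi/n$, which is exactly what your $\theta=\pi/n$ computation treats (otherwise $X=\mathrm{diag}(1,0)$, $n=2$, would be a spurious counterexample to the boundary clause as literally stated in the paper).
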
  
\end{itemize}

%
%
\section{Alternative proof of Conjecture \ref{parpos}}
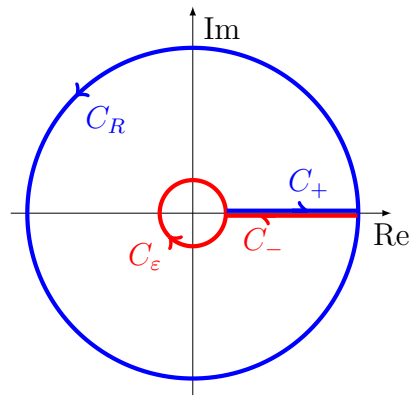
\begin{wrapfigure}[14]{r}{0.41\textwidth}\vspace{-1cm}
\begin{center}
    \begin{tikzpicture}
        \begin{scope}[scale=2.2]
\tikzstyle arrowstyle=[scale=27]
\draw[color=black, very thin, -latex] (-1.1,0) -- (1.2,0) node[below] {$\textrm{Re}$};
\draw[color=black, very thin, -latex] (0,-1.1) -- (0,1.25) node[below right] {$\textrm{Im}$};

\draw[color=red,ultra thick] plot[domain=0:225,samples=100] ({.2*cos(\x)},{.2*sin(\x)}) node[below left = -2] {$C_\varepsilon$};;
\draw[color=red,ultra thick,to-] plot[domain=224:360,samples=100] ({.2*cos(\x)},{.2*sin(\x)});
\draw[color=blue,ultra thick,-to] plot[domain=0:135,samples=100] ({1*cos(\x)},{1*sin(\x)}) node[below right = -1] {$C_R$};
\draw[color=blue,ultra thick] plot[domain=134:360,samples=100] ({1*cos(\x)},{1*sin(\x)});
\draw[color=blue,ultra thick,-left to] (.205,.015) -- (.7,.015) node[above] {$C_+$};;
\draw[color=blue,ultra thick] (.6,.015) -- (1,.015);
\draw[color=red,ultra thick,-left to] (.995,-.015) -- (.4,-.015) node[below] {$\;C_-$};;
\draw[color=red,ultra thick] (.5,-.015) -- (.2,-.015);

\end{scope}
    \end{tikzpicture}
    \caption{\label{fig:complex_plane}The path $C$ consists of the following four pieces $C_\eps$, $C_+$, $C_R$ and $C_-$.}
\end{center}
\end{wrapfigure}

In Section \ref{section:proof}, we have only shown Conjecture \ref{parpos} under the additional assumption that $e\in\R_+^n$ corresponds to pairwise different roots $\varphi(e)$ which was sufficient to prove the SSLI. Nevertheless, it is striking that the expression of the partial derivatives

\begin{equation}
\frac{\partial(f\circ\varphi)}{\partial e_k}(e) = 2\int_0^\infty \frac{t^{n-k-1}}{\prod_{j=1}^n(t+\varphi_j(e))}\intd t
\end{equation}

can be extended continuously to $e\in\R_+^n$ with multiple roots. This strongly suggests that Conjecture \ref{parpos} indeed holds for all $e\in\R_+^n$. We now restate the conjecture as a lemma and give a short proof for the general case.

\begin{lemma}
\label{borisov}
For $a=(a_1,\ldots,a_{n-1})\in \R_+^{n-1}$ let $\widehat{h}_a(z) = z^n + a_{n-1}\,z^{n-1} + \ldots + a_1\,z + 1$. Define the function $\widehat f\colon \R_+^{n-1}\to\R$ by
\begin{equation*}
\widehat f(a_1,\ldots,a_{n-1})\ =\ \sum_{\substack{\widehat z\in\C\\\widehat{h}_a(\widehat{z})=0}}\bigl(\log(-\widehat{z})\bigr)^2\,,
\end{equation*}

where $\log$ denotes the principal branch of the complex logarithm on $\C\setminus\R_{\leq 0}$ and roots are counted with multiplicities.  Then all partial derivatives $\frac{\partial \widehat f}{\partial a_k}(a)$ for any $a\in\R_+^{n-1}$ are positive.
\end{lemma}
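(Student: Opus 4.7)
The core goal is to establish the integral representation
\[
\frac{\partial \widehat{f}}{\partial a_k}(a) \;=\; 2\int_0^\infty \frac{t^{k-1}}{\widehat{h}_a(t)}\,\intd t
\]
for every $a\in\R_+^{n-1}$. Since $a_i>0$ forces $\widehat{h}_a(t)>0$ on $\R_{\ge 0}$, and the denominator grows like $t^n$ at infinity, the integrand is positive and integrable, so the conclusion follows at once.

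First I would treat the case in which $\widehat{h}_a$ has $n$ distinct roots $\widehat{z}_1,\dots,\widehat{z}_n$; these automatically lie off the branch cut $\R_{\ge 0}$ of $z\mapsto\log(-z)$. The implicit function theorem gives $\partial\widehat{z}_j/\partial a_k = -\widehat{z}_j^{\,k}/\widehat{h}_a'(\widehat{z}_j)$, and combined with $(d/dz)\log(-z)=1/z$ the chain rule yields
\[
\frac{\partial \widehat{f}}{\partial a_k}(a) \;=\; -2\sum_{j=1}^n \frac{\widehat{z}_j^{\,k-1}\log(-\widehat{z}_j)}{\widehat{h}_a'(\widehat{z}_j)}.
\]
To turn this residue-type sum into a real integral I would integrate $g(z)\colonequals z^{k-1}\log(-z)/\widehat{h}_a(z)$ along the keyhole contour $C = C_\varepsilon\cup C_+\cup C_R\cup C_-$ depicted in Figure~\ref{fig:complex_plane}, oriented so as to enclose every root of $\widehat{h}_a$. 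Across the positive real axis the principal value of $\log(-z)$ jumps by $-2\pi i$, so in the limits $\varepsilon\to 0$, $R\to\infty$ the segments $C_\pm$ together contribute $-2\pi i\int_0^\infty t^{k-1}/\widehat{h}_a(t)\,\intd t$, while the arcs vanish thanks to the elementary size estimates $|g|=O(\varepsilon^{k-1}\log\varepsilon)$ on $C_\varepsilon$ and $|g|=O(\log R/R^{n-k+1})$ on $C_R$, both integrating to $0$ precisely for $k\in\{1,\dots,n-1\}$. The residue theorem evaluates $\oint_C g\,\intd z$ as $2\pi i$ times the sum displayed above; equating the two evaluations yields the target integral formula whenever $\widehat{h}_a$ has only simple roots.

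The main obstacle is extending the identity to configurations in which $\widehat{h}_a$ has coincident roots, where the implicit function theorem breaks down. I would argue by density. The right-hand side $F(a)\colonequals 2\int_0^\infty t^{k-1}/\widehat{h}_a(t)\,\intd t$ is visibly smooth on all of $\R_+^{n-1}$, and $\widehat{f}$ is continuous there because the multiset of roots of $\widehat{h}_a$ depends continuously on its coefficients. Given $a\in\R_+^{n-1}$, Proposition~\ref{endlnst} applied to the line $s\mapsto a+s\,\mathbf{e}_k$ shows that $\widehat{h}_{a+s\mathbf{e}_k}$ has multiple roots for only finitely many $s$, so the simple-root identity $\partial\widehat{f}/\partial a_k = F$ just proved holds off this finite set. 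Combining the continuity of $\widehat{f}$ and $F$ with the fundamental theorem of calculus then gives
\[
\widehat{f}(a+t\,\mathbf{e}_k)-\widehat{f}(a) \;=\; \int_0^t F(a+s\,\mathbf{e}_k)\,\intd s,
\]
which promotes $\widehat{f}$ to a $C^1$ function of $a_k$ at every $a\in\R_+^{n-1}$ with $\partial\widehat{f}/\partial a_k = F>0$. The keyhole formulation is decisive: unlike the residue sum, $F(a)$ extends smoothly across the discriminant locus, which is precisely what makes this density argument go through.
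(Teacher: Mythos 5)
Your argument arrives at the same integral representation $\frac{\partial\widehat f}{\partial a_k}(a)=2\int_0^\infty t^{k-1}/\widehat h_a(t)\,\intd t$ as the paper, but by a genuinely different route. You compute the derivative first (implicit function theorem plus chain rule, which requires simple roots), then convert the resulting residue sum into the real integral via the keyhole contour applied to $z^{k-1}\log(-z)/\widehat h_a(z)$, and finally patch the degenerate coefficient vectors by a continuity/density argument. The paper instead represents $\widehat f$ itself as the contour integral $\frac1{2\pi i}\int_C(\log(-z))^2\,\widehat h_a'(z)/\widehat h_a(z)\,\intd z$ via the generalized argument principle --- an identity valid with multiplicities --- and differentiates under the integral sign, which is legitimate because the integrand depends smoothly on $a$ along the compact contour, which stays away from all roots. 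That yields positivity at \emph{every} $a\in\R_+^{n-1}$ in one stroke, with no genericity hypothesis and no limiting argument across the discriminant locus; your two-step version is the price of starting from the root-wise formula. Your contour bookkeeping itself (values $\log t\mp\pi i$ on the two sides of $\R_{\geq 0}$, vanishing of the arcs for $1\le k\le n-1$, the sign matching between the residue sum and $-2\sum_j \widehat z_j^{\,k-1}\log(-\widehat z_j)/\widehat h_a'(\widehat z_j)$) is correct and essentially identical to the paper's limit computation.

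One step needs repair: at a degenerate point $a$ you invoke Proposition~\ref{endlnst} for the segment $s\mapsto a+s\,\mathbf e_k$, but that proposition assumes that at least one endpoint polynomial has only simple roots --- which is precisely what you do not know in the case you need it for (if $\widehat h_a$ had simple roots you would not need the density argument at $a$, and you have no a priori guarantee that $\widehat h_{a+t\mathbf e_k}$ has simple roots either). The finiteness you want is true and easy to supply directly: a multiple root $z_0$ of $\widehat h_a+s\,z^k$ satisfies $z_0\neq0$ (the constant term is $1$) together with
\begin{equation*}
\widehat h_a(z_0)+s\,z_0^k=0\,,\qquad \widehat h_a'(z_0)+s\,k\,z_0^{k-1}=0\,,
\end{equation*}
and eliminating $s$ gives $z_0\,\widehat h_a'(z_0)-k\,\widehat h_a(z_0)=0$, a nonzero polynomial equation of degree $n$ in $z_0$ (leading coefficient $n-k\neq0$); each of its at most $n$ solutions determines $s=-\widehat h_a(z_0)/z_0^k$ uniquely, so the coordinate line meets the discriminant locus in at most $n$ parameters. (Equivalently: the discriminant of $\widehat h_{a+s\mathbf e_k}$ is a polynomial in $s$ that is not identically zero.) With this inserted, your continuity argument --- continuity of $\widehat f$ and of $F$, the fundamental theorem of calculus between the finitely many bad parameters, and differentiation at $t=0$ --- does close the multiple-root case, and the rest of the proposal is sound.
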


Note carefully that the roots $z_1,\dotsc,z_n$ of $h_a$ in the original formulation of  Conjecture \ref{parpos} correspond to the negative $-\widehat{z}_1,\dotsc,-\widehat{z}_n$ of the roots of $\widehat{h}_a$ in Lemma \ref{borisov}.

The function $\widehat f$ is well defined, since none of the roots of $\widehat{h}_a$ are positive real numbers. Furthermore, $\widehat f$ is real-valued, because the roots $\widehat{z}$ of $\widehat{h}_a$ only occur as real numbers or complex conjugate pairs, which means that the values of $\bigl(\log(-\widehat{z})\bigr)^2$ are also real or complex conjugate pairs.

\begin{proof}
We write the function $\widehat f$ as a contour integral. In order to accommodate the multiple values of $\log(-z)$ we consider the Riemann surface with boundary obtained  by gluing two copies of $\R_+$ to $\C\setminus \R_{\geq 0}$, where $R_{\geq 0} \colonequals \{x\in\R\,|\, x\geq 0\}$. We will view the two copies as \emph{north} and \emph{south} copies $\R_+$ depending on whether they are approached from above and from below respectively. Note that the $\log(-z)$ function extends to the following 

\begin{equation*}
\log(-r_{\mathrm{north}})=\log r -\pi i,~~~\log(-r_{\mathrm{south}}) = \log r + \pi i 
\end{equation*}
on the north and south copies of $\R_+$.

In a neighborhood of fixed $a\in(\R_{\geq 0})^{n-1}$, for any sufficiently large $R$ and small enough $\eps>0$, we can apply the generalized argument principle \cite{ammari2009layer} to find\footnote{A related representation to \eqref{fa} has been used recently in \cite{jozsa2015symmetric} in connection with the entropy function.}
\begin{equation}\label{fa}
\widehat f(a)\ =\ \frac 1{2\pi i}\int_C\bigl(\log(-z)\bigr)^2 \, \frac{\widehat{h}_a'(z)}{\widehat{h}_a(z)} \, \intd z\,,
\end{equation}

where the path $C$ consists of the following four pieces $C_\eps$, $C_+$, $C_R$ and $C_-$, see also Figure \ref{fig:complex_plane}:

\begin{itemize}\setlength{\itemsep}{0.4em}
\item $C_\eps$ is the circle of radius $\eps$ around $0$ travelled  clockwise from $\eps_{\mathrm{south}}$ to $\eps_{\mathrm{north}}$;
\item $C_+$ is the line segment from $\eps_{\mathrm{north}}$ to $R_{\mathrm{north}}$ on the north copy of the  positive real axis;
\item $C_R$ is the circle of radius $R$ around $0$ travelled counter-clockwise from $R_{\mathrm{north}}$ to $R_{\mathrm{south}}$;
\item $C_-$ is the line segment from $R_{\mathrm{south}}$ to $\eps_{\mathrm{south}}$ on the south copy of the positive axis.
\end{itemize}
By straightforward computation, we find
\begin{align}
\frac{\partial}{\partial a_k}\,\frac{\widehat h'_a(z)}{\widehat h_a(z)}\ &=\ \frac{\frac{\partial}{\partial a_k}\,\widehat h'_a(z)\cdot \widehat h_a(z) - \widehat h'_a(z)\cdot \frac{\partial}{\partial a_k}\,\widehat h'_a(z)}{\bigl(\widehat h_a(z)\bigr)^2}\notag\\
&=\ \frac{k\,z^k\cdot \widehat h_a(z) - z^k\cdot \widehat h'_a(z)}{\bigl(\widehat h_a(z)\bigr)^2}\ \,=\ \left(\frac{z^k}{\widehat h_a(z)}\right)'\,.\label{strcomp}
\end{align}

The integrand is analytic on the compact path $C$, hence we can differentiate inside the integral:
\begin{align}
\frac{\partial \widehat f(a)}{\partial a_k}\ &=\ \frac 1{2\pi i}\int_C\bigl(\log(-z)\bigr)^2 \; \frac{\partial}{\partial a_k}\,\frac{\widehat{h}_a'(z)}{\widehat{h}_a(z)}\,\intd z\ \overset{\eqref{strcomp}}=\ \frac 1{2\pi i}\int_C\bigl(\log(-z)\bigr)^2\left(\frac{z^k}{\widehat{h}_a(z)}\right)'\intd  z\notag\\
&=\ -\frac 1{2\pi i}\int_C\left(\bigl(\log(-z)\bigr)^2\right)'\frac{z^{k}}{\widehat{h}_a(z)}\,\intd z\ =\ -\frac 1{\pi i}\int_C\log(-z)\frac{z^{k-1}}{\widehat{h}_a(z)}\,\intd z .
\end{align}
We can now take the limits as $R\to +\infty$ and $\eps \to 0$. Since $k\leq n-1$, the integral over $C_R$ tends to zero (the length of $C_R$ is $2\pi R$, while the function is of order $O(R^{-2}\log R)$). The integral over $C_\eps$ also tends to zero, since for $k\geq 1$, the integrand is of order $O(\log \eps)$ and the length of $C_\eps$ is $2\pi\eps$.

We conclude:
\begin{align}
\frac{\partial f(a)}{\partial a_k}\ &=\ -\frac 1{\pi i}\,\lim_{\eps\searrow 0,R\to\infty}\left(\int_{C_+\cup C_-}\!\log(-z)\frac{z^{k-1}}{\widehat{h}_a(z)}\intd z\right)\notag\\
&=\ -\frac 1{\pi i}\,\lim_{\eps\searrow 0,R\to\infty}\left(\int_{\textrm{$[\eps,R]$}}\bigl((\log t-\pi i)-(\log t+\pi i)\bigr)\frac{t^{k-1}}{\widehat{h}_a(t)}\intd t\right)\\
&=\ 2\,\lim_{\eps\searrow 0,R\to\infty}\int_\eps^R\frac{t^{k-1}}{\widehat{h}_a(t)}\intd t=2\int_0^{+\infty}\frac{t^{k-1}}{\widehat{h}_a(t)}\intd t\ >\ 0\,.\qedhere
\end{align}
\end{proof}

\section*{Acknowledgement}
L.B. was partially supported by NSF grant DMS-1201466. We are grateful to Johannes Lankeit (Universität Paderborn) for fruitful discussions on the SSLI. We also want to thank Prof.\ Georg Hein (Universität Duisburg-Essen) for his suggestions on algebraic topics. Our special thanks go out to Robert Martin (Universität Duisburg-Essen), who is always a great support with his commitment and considerable technical competence.

{\footnotesize
}

\end{document}